\theoremstyle{plain}
\newtheorem{theorem}{Theorem}[section]
\newtheorem{corollary}[theorem]{Corollary}
\newtheorem{lemma}[theorem]{Lemma}
\newtheorem{proposition}[theorem]{Proposition}
\theoremstyle{definition}
\newtheorem{definition}[theorem]{Definition}
\newtheorem{example}[theorem]{Example}
\theoremstyle{remark}
\newtheorem{remark}[theorem]{Remark}
\newcommand{\A}{\mathcal{A}}
\newcommand{\B}{\mathcal{B}}
\newcommand{\scF}{\mathcal{F}}
\newcommand{\scC}{\mathcal{C}}
\newcommand{\SC}{\mathcal{SC}}
\newcommand{\R}{\mathbb{R}}
\newcommand{\Z}{\mathbb{Z}}
\newcommand{\V}{{\mathcal{V}}}
\newcommand{\E}{{\mathcal{E}}}
\newcommand{\vn}{\noindent}
\newcommand{\I}{{\mathcal{I}}}
\newcommand{\Br}{\operatorname{Br}}
\newcommand{\supp}{\operatorname{supp}}
\newcolumntype{K}[1]{>{\centering\arraybackslash}p{#1}}
\begin{document}

\title[Worpitzky-compatible arrangements and cocomparability graphs]{Worpitzky-compatible subarrangements of braid arrangements and cocomparability graphs}

\date{\today}

\begin{abstract}
The class of Worpitzky-compatible subarrangements of a Weyl arrangement together with an associated Eulerian polynomial was recently introduced by Ashraf, Yoshinaga and the first author, which brings the characteristic and Ehrhart quasi-polynomials into one formula. The subarrangements of the braid arrangement, the Weyl arrangement of type $A$, are known as the graphic arrangements. We prove that the Worpitzky-compatible graphic arrangements are characterized by cocomparability graphs. Our main result yields new formulas for the chromatic and graphic Eulerian polynomials of cocomparability graphs. 
 \end{abstract}

\author{Tan Nhat Tran}
\address{Tan Nhat Tran, Department of Mathematics, Hokkaido University, Kita 10, Nishi 8, Kita-Ku, Sapporo 060-0810, Japan.}
\email{trannhattan@math.sci.hokudai.ac.jp}

\author{Akiyoshi Tsuchiya}
\address{Akiyoshi Tsuchiya, Graduate School of Mathematical Sciences, University of Tokyo, 3-8-1 Komaba, Meguro-ku, Tokyo 153-8914, Japan.}
\email{akiyoshi@ms.u-tokyo.ac.jp}

\subjclass[2010]{Primary: 05C75, 17B22, Secondary: 52C35, 05C31}
\keywords{Cocomparability graph, unit interval graph, free arrangement, supersolvable arrangement, braid arrangement, compatible subarrangement, ideal subarrangement, chromatic polynomial, $\mathcal{A}$-Eulerian polynomial}

\date{\today}
\maketitle

\section{Introduction}
\label{sec:intro}

Let   $V$ be an $\ell$-dimensional Euclidean vector space with the standard inner product $(\cdot,\cdot)$.
 Let $\Phi$ be an irreducible (crystallographic) root system in $V$, with a fixed positive system $\Phi^+ \subseteq \Phi$ and the associated set of simple roots $\Delta := \{\alpha_1,\ldots,\alpha_\ell \}$. 
 For $m \in \Z$ and $\alpha \in  \Phi$, define the \emph{affine hyperplane} $H_{\alpha,m}$ by
$H_{\alpha,m} :=\{x\in V \mid(\alpha,x)=m\}.$ 
 For $\Psi\subseteq\Phi^+$, the \emph{Weyl subarrangement} $\A_{\Psi}$ of $\Psi$ is  defined by $\A_{\Psi}:= \{H_{\alpha,0} \mid \alpha\in\Psi\}$. 
In particular, $\A_{\Phi^+}$ is called the \emph{Weyl arrangement}. 
Define the partial order $\ge$ on $\Phi^+$ as follows: $\beta_1 \ge \beta_2$ if $\beta_1-\beta_2 \in\sum_{i=1}^\ell \Z_{\ge 0}\alpha_i$. 
A subset $\Psi\subseteq\Phi^+$ is an \emph{ideal} of $\Phi^+$  if for $\beta_1,\beta_2 \in \Phi^+$, $\beta_1 \ge \beta_2, \beta_ 1 \in \Psi$ implies $\beta_2 \in \Psi$. 
For an ideal $I\subseteq\Phi^+$, the corresponding Weyl subarrangement $\A_{I}$ is called the \emph{ideal subarrangement}.

We will be mainly interested in the case $\Phi$ of type $A_{\ell-1}$, in which $\A_{\Phi^+}$ is widely known as the \emph{braid arrangement}, denoted $\Br(\ell)$. 
We recall a popular construction of type $A$ root systems.
Let $\{\epsilon_1, \ldots, \epsilon_{\ell}\}$ be an orthonormal basis for $V$, and define $U : = \{ \sum_{i=1}^{\ell} r_i\epsilon_i  \in V\mid \sum_{i=1}^{\ell} r_i=0\} \simeq \R^{\ell-1}$. 
 The set $\Phi(A_{\ell-1}) = \{\pm(\epsilon_i - \epsilon_j) \mid 1 \le i<j \le \ell\}$ 
is a root system of type $A_{\ell-1}$ in $U$, with a positive system
$
\Phi^+(A_{\ell-1}) =  \{ \epsilon_i-\epsilon_j \mid 1 \le i <  j \le \ell\}
$
 and the associated set of simple roots $\Delta(A_{\ell-1}) = \{\alpha_i:=\epsilon_i - \epsilon_{i+1} \mid 1 \le i  \le \ell-1\}$.
Thus, a subarrangement $\B$ of $\Br(\ell)$ is completely defined by a simple graph $G = ([\ell],\E)$, where $\{x_i-x_j=0\} \in \B$ if and only if $\{i,j\}  \in \E$.
 Given a graph $G$, let $\A(G)$ be the arrangement that it defines, or the corresponding \emph{graphic arrangement}.
It is a standard fact that $\A(G)$ is the product (e.g., \cite[Definition 2.13]{OT92}) of the one dimensional empty arrangement and the Weyl subarrangement $\A_{\Psi(G)}$, where $\Psi(G) := \{  \epsilon_i-\epsilon_j  \mid \{i,j\}  \in \E \,( i <  j)\} \subseteq \Phi^+(A_{\ell-1}).$
Throughout the paper, for any property that $\A_{\Psi(G)}$ has, we will say $\A(G)$ has that property as well.

Weyl arrangements are an important class of \emph{free} arrangements in the sense of Terao. 
In other words, the arrangement's logarithmic derivation module is a free module \cite[\S4, \S6]{OT92}. 
There has been considerable interest  in analyzing subarrangements of a Weyl arrangement from the perspective of freeness. 
A central hyperplane arrangement is \emph{supersolvable} if its intersection lattice is supersolvable in the sense of Stanley \cite{St72}. 
Jambu-Terao  proved that any supersolvable arrangement is free \cite{JT84}. 
Various free subarrangements of a Weyl arrangement of  type $B$  were studied, e.g., \cite{JS93, ER94, STT19}. 
Remarkably, a striking result of Abe-Barakat-Cuntz-Hoge-Terao \cite{ABCHT16} asserts that any ideal subarrangement is free. 

Although characterizing free subarrangements of an arbitrary Weyl arrangement is still a challenging problem, in the case of braid arrangement, the free subarrangements can be completely analyzed using the connection to graphs. 
It follows from the works of Stanley \cite{St72} and Edelman-Reiner \cite{ER94} that free and supersolvable graphic arrangements  are synonyms, and they correspond to \emph{chordal graphs} (every induced cycle in the graph has exactly $3$ vertices). 
Chordal graphs are a superclass of \emph{(unit) interval graphs} (each vertex can be associated with an (unit) interval on the real line, and two vertices are adjacent if the associated intervals have a nonempty intersection). 
More strongly, a graph is an interval graph if and only if it is a \emph{cocomparability} (Definition \ref{def:c-co}) and chordal graph \cite{GH64} (see Figure \ref{fig:relation} for an illustration). 
It is also notable that the ideal subarrangements of a braid arrangement are parametrized by unit interval graphs 
(Theorem  \ref{thm:ideal-characterize}). 
 \begin{definition}
 \label{def:c-co}
 A graph is called a  \emph{comparability graph} if its edges can be transitively oriented, i.e., if $u \to v$ and $v \to w$, then $u \to w$. 
  A graph is called a  \emph{cocomparability graph} if its complement is a comparability graph.
\end{definition}


Thus, it is natural to ask which class of Weyl subarrangements generalizes the cocomparability graphs.
Recently, the notion of \emph{(Worpitzky-)compatible arrangements} was introduced by Ashraf, Yoshinaga and the first author in the study of characteristic quasi-polynomials of Weyl subarrangements and Ehrhart theory \cite{ATY20}. It is shown that any ideal subarrangement is compatible \cite[Theorem 4.16]{ATY20}.
In this paper we prove that, interestingly,  the Worpitzky-compatible graphic arrangements are characterized by cocomparability graphs, which gives an answer to the aforementioned question.

To state the result formally, we first recall the concept of compatibility. 
A connected component of $V \setminus \bigcup_{ \alpha\in \Phi^+,m \in \Z} H_{\alpha,m}$ is called an \emph{alcove}.
Let $A$ be an alcove. 
The \emph{walls} of $A$ are the hyperplanes that support a facet of $A$. 
The \emph{ceilings} of $A$ are the walls which do not pass through the origin and have the origin on the same side as $A$. 
The  \emph{upper closure} $A^\diamondsuit$  of $A$ is the union of $A$ and its facets supported by the ceilings of $A$.
Let  $P^\diamondsuit:=\{x\in V \mid 0<(\alpha_i ,x)\le 1\,(1 \le i \le \ell)\}$ be the \emph{fundamental parallelepiped} (of the coweight lattice) of $\Phi$. 
Thus, 
$$P^\diamondsuit=\bigsqcup_{A:\, \text{alcove},\,A \subseteq P^\diamondsuit}A^\diamondsuit,
$$ 
which is known as the \emph{Worpitzky partition}, e.g., \cite[Proposition 2.5]{Y18W}, \cite[Exercise 4.3]{H90}.

 \begin{definition}{(\cite[Definition 4.8]{ATY20})}
 \label{def:compatibleW}   
A subset $\Psi \subseteq \Phi^+$ is said to be \emph{Worpitzky-compatible} (or \emph{compatible} for short) if every nonempty intersection of the upper closure $A^\diamondsuit$ of an alcove $A\subseteq P^\diamondsuit$ and an affine hyperplane w.r.t. a root in $\Psi$ can be lifted to a facet intersection.
That is, $A^\diamondsuit \cap H_{\alpha, m_\alpha}$ for $\alpha\in \Psi,m_\alpha \in \Z$ is either empty, or contained in a ceiling $H_{\beta, m_\beta}$ of $A$  with $\beta\in \Psi,m_\beta \in \Z$. 
If  $\Psi$ is compatible, the Weyl subarrangement $\A_{\Psi}$ is said to be \emph{compatible} as well.

\end{definition}

Our main result is the following.
\begin{theorem}
 \label{thm:cocomp-characterize}
 Let  $G = (\V,\E)$ be a graph with $|\V|=\ell$.  The following  are equivalent. 
\begin{enumerate}[(i)] 
\item $G$ has a labeling using elements from $[\ell]$ so that $\A(G)$ is a compatible graphic arrangement. 
\item  $G$ has an ordering  $v_1 < v_2  < \cdots  <  v_\ell$ of its vertices such that if $i   < j$, $\{v_i, v_j\} \in \E$ and $(p_1, \ldots, p_m)$ is any sequence such that $i = p_1 < p_2 <\cdots < p_m=j$, then there exists $p_a$ with $1 \le a < m$ such that $\{v_{p_a},v_{p_{a+1}}\}  \in \E$. 
\item $G$ is a cocomparability graph.
    \end{enumerate}
\end{theorem}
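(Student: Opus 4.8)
The plan is to prove the equivalence by splitting it into a purely graph-theoretic part, (ii) $\Leftrightarrow$ (iii), and an arrangement-theoretic part, (i) $\Leftrightarrow$ (ii), the latter resting on an explicit description of the alcoves of $\Br(\ell)$ lying in $P^\diamondsuit$ and of their ceilings.

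For (ii) $\Leftrightarrow$ (iii) I would first reduce the ``all chains'' condition in (ii) to the three-term \emph{umbrella} condition: for an ordering $v_1 < \cdots < v_\ell$ and indices $i < j < k$ with $\{v_i,v_k\} \in \E$, at least one of $\{v_i,v_j\}, \{v_j,v_k\}$ lies in $\E$. Taking the sequence $(i,j,k)$ shows (ii) implies this; conversely, given an edge $\{v_i,v_j\}$ and a chain $i = p_1 < \cdots < p_m = j$, an induction on $m$ applying the umbrella condition to the triple $(p_1,p_2,p_m)$ produces a consecutive edge, so the two conditions coincide. It then remains the classical fact that a graph admits an umbrella-free vertex ordering if and only if it is a cocomparability graph (Definition \ref{def:c-co}): from a transitive orientation of the complement one takes a linear extension, and transitivity translates exactly into the absence of umbrellas, while conversely an umbrella-free ordering orients each non-edge $\{v_i,v_j\}$ with $i<j$ as $v_i \to v_j$ and the umbrella condition makes this orientation transitive.

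The substance lies in (i) $\Leftrightarrow$ (ii). Fixing a labeling, hence the order $1 < \cdots < \ell$ on $\V$, I would show that $\A(G)$ is compatible (Definition \ref{def:compatibleW}) if and only if the chain condition of (ii) holds for this order; quantifying over labelings then gives the equivalence with (i). To analyze compatibility I would use that every alcove $A \subseteq P^\diamondsuit$ is a simplex with exactly $\ell$ facets, and encode $A$ by the linear order of the fractional parts of the coordinates of a representative $x_1 > \cdots > x_\ell$ (normalized by $x_\ell = 0$) together with their integer parts; collisions of adjacent fractional parts give the facets, and among these the ceilings are exactly the ``upward'' walls $H_{\epsilon_a-\epsilon_b,\,c+1}$ on which $A$ lies on the origin side. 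For a root $\epsilon_i-\epsilon_j \in \Psi$ the set $A^\diamondsuit \cap H_{\epsilon_i-\epsilon_j,m}$ is nonempty only for the top value $m = \lfloor x_i - x_j \rfloor + 1$ attained on $A$, where it equals the face of $\bar A$ maximizing $x_i - x_j$ with the floor faces removed; compatibility asks that this face lie on a single ceiling whose root is again in $\Psi$.

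The heart of the argument, and the step I expect to be hardest, is to match these top faces with the chains of (ii). In one direction, if the chain condition holds then for every alcove and every $\epsilon_i-\epsilon_j \in \Psi$ the maximizing face is supported by ceilings coming from consecutive pairs of a chain $i = p_1 < \cdots < p_m = j$, and the hypothesis forces one such pair to be an edge, i.e.\ its root to lie in $\Psi$, furnishing the required covering ceiling. In the other direction, from a violating edge $\{i,j\}$ and a chain $(p_1,\ldots,p_m)$ with no consecutive edge I would construct an explicit alcove near the point where $x_{p_a} - x_{p_{a+1}} = 1$ for all $a$: there the face $\{x_i - x_j = m-1\} \cap \bar A$ degenerates to a low-dimensional face whose only covering ceilings carry the roots $\epsilon_{p_a}-\epsilon_{p_{a+1}} \notin \Psi$, exhibiting incompatibility. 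The delicate points are to check that this prescribed configuration is realized by a genuine alcove inside $P^\diamondsuit$, that the maximizing face is covered by no other ceiling with root in $\Psi$, and that the reduction to consecutive pairs of chains is exhaustive; carrying out this alcove bookkeeping uniformly in $\ell$ is the main obstacle, whereas the case $\ell = 3$ already displays the mechanism, the bad configuration being the long edge $\{1,3\}$ whose top vertex sits precisely on the two ceilings $H_{\epsilon_1-\epsilon_2,1}$ and $H_{\epsilon_2-\epsilon_3,1}$.
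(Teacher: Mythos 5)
Your graph-theoretic half is fine: the reduction of the chain condition in (ii) to the three-term umbrella condition, and the classical equivalence between umbrella-free orderings and cocomparability graphs, is exactly the paper's Lemma \ref{lem:cocomp-characterize1} (the paper cites the literature for the classical part you re-derive, and proves the chain/umbrella equivalence by the same shrinking induction you describe). The problem is the arrangement-theoretic half (i) $\Leftrightarrow$ (ii), where your proposal stops at precisely the two points you yourself flag as ``delicate'': what you leave unverified is not bookkeeping but the entire mathematical content of the paper's Section 2.

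Concretely: (a) in the direction ``chain condition $\Rightarrow$ compatible'' you assert that for an arbitrary alcove $A \subseteq P^\diamondsuit$ the ceilings covering a nonempty face $A^\diamondsuit \cap H_{\epsilon_i-\epsilon_j,\,m}$ carry roots which are the consecutive differences of a chain from $i$ to $j$. This is a theorem, not an observation: it is Lemma \ref{lem:ideal-crucial} (imported by the paper from the proof of Theorem 4.16 in \cite{ATY20}), which says such a face forces $\epsilon_i-\epsilon_j \in \sum_j \Z_{\ge 0}\beta_j$ over the ceiling roots $\beta_j$, combined with the type-$A$ computation in Corollary \ref{cor:c-A} (using the independence of $\epsilon_1,\dots,\epsilon_\ell$) showing that every such decomposition is a chain decomposition. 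Without a proof of this covering-ceilings-give-a-decomposition statement, your forward direction is an assertion. (b) In the direction ``violated chain $\Rightarrow$ incompatible'', the phrase ``an explicit alcove near the point where $x_{p_a}-x_{p_{a+1}}=1$ for all $a$'' does not determine an alcove: many alcoves are adjacent to that locus, and for most of them either some $H_{\epsilon_{p_a}-\epsilon_{p_{a+1}},1}$ is a floor rather than a ceiling, or the maximizing face is covered by further ceilings whose roots could lie in $\Psi(G)$; one must control all remaining coordinates simultaneously, uniformly in $\ell$. The paper solves exactly this by Shi's criterion (Lemma \ref{lem:shi}): partition $\Delta$ into the supports $S_1,\dots,S_m$ of the chain roots together with the leftover pieces $S_0, S_{m+1}$, set $r_\gamma := \#\{i \mid \supp(\gamma)\cap S_i \ne \emptyset\}$, verify Shi's inequalities to get the alcove, and then verify that each $H_{\beta_a,1}$ is a wall by a second application of Shi's lemma with $r_{\beta_a}$ bumped to $2$; this is the whole proof of Theorem \ref{thm:C=SC-A}. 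The paper also packages both directions through the intermediate, purely root-theoretic notion of strong compatibility (Definition \ref{def:strongly-compatible}), so that the alcove geometry is confronted only once; your plan, by dispensing with it, would have to carry out the alcove analysis in both directions. The route you propose is not wrong in principle, but as it stands the two steps above are missing, and they are the heart of the proof.
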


In particular, our main result yields a new characterization of cocomparability graphs and in turn contributes to the second row and the fourth row of Table \ref{tab:concepts}.

\begin{table}[htbp]
\centering
{\renewcommand\arraystretch{1.5} 
\begin{tabular}{c|c|c}
Graph class & Weyl subarrangement class & Location \\
\hline\hline
cocomparability &  compatible (= strongly compatible) & Theorems  \ref{thm:cocomp-characterize},  \ref{thm:C=SC-A}\\
\hline
chordal  &  free (= supersolvable)   & \cite{St72,ER94} \\
\hline
interval &   compatible $\cap$ free  & Corollary \ref{cor:interval}\\
\hline
unit interval &  ideal & Theorem  \ref{thm:ideal-characterize}\\
\end{tabular}
}
\bigskip
\caption{Parallel concepts in type $A$.}
\label{tab:concepts}
\end{table}

\def\firstcircle{(0,0) circle (2.5cm)}
\def\secondcircle{(2,0) circle (2.5cm)}
\def\thirdcircle{((1,0) ellipse (1.2cm and 1cm)}
\def\fourthcircle{((-1,0)  ellipse (1.5cm and 1cm)}
\def\fifthcircle{((1.5,0) ellipse (2cm and 1.5cm)}

\begin{figure}[htbp]
\centering
\begin{tikzpicture}
    \draw \firstcircle;
    \draw \secondcircle ;
    \draw \thirdcircle node  {U};
    \node at (1,1.8)    {I};    
    \node at (-1.7,1)    {Ch};
    \node at (3.8,1)    {Co};
 
\end{tikzpicture}
\caption{Relationship between graph classes: $\text{U} \subsetneq \text{I} = \text{Ch} \cap \text{Co}$. Co: cocomparability, Ch: chordal, I: interval, U: unit interval.}
\label{fig:relation}
\end{figure}
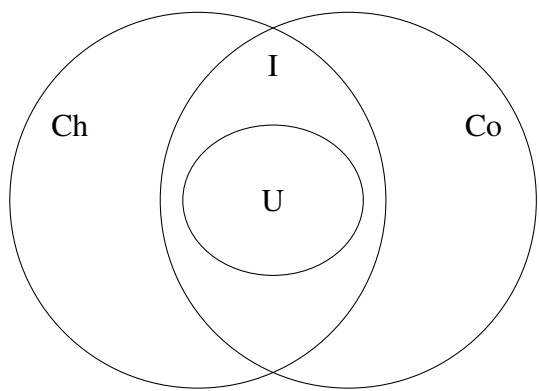

\begin{figure}[htbp]
\centering
\begin{tikzpicture}
    \draw \firstcircle;
    \draw[red] \secondcircle ;
    \draw \thirdcircle node  {$\I$};
        \draw \fourthcircle ;
            \draw[red] \fifthcircle ;
    \node at (-1.8,.5)    {$\mathcal{SS}$};
    \node at (-1, 2)    {$\scF$};
    \node at (3,2)    {\color{red}{$\scC$}};
        \node at (3,.5)    {\color{red}{$\SC$}};

\end{tikzpicture}
\caption{Relationship  between Weyl subarrangement  classes. $\scC$: compatible, $\SC$: strongly compatible, $\scF$: free, $\mathcal{SS}$: supersolvable, $\I$: ideal.}
\label{fig:relation-arr}
\end{figure}
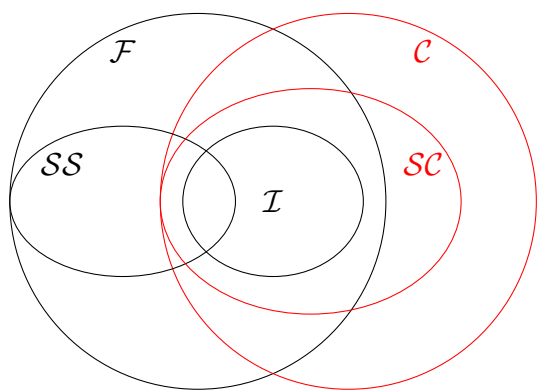

\section{Proof of the main result}
\label{sec:proof}

First, we introduce a new subclass of Worpitzky-compatible sets, which will play a key role in the proof of our main result.
 \begin{definition}
 \label{def:strongly-compatible}   
A subset $\Psi \subseteq \Phi^+$ is said to be \emph{strongly (Worpitzky-)compatible} if for every $\alpha \in \Psi$ and  for any choice of positive roots $\beta_1, \ldots, \beta_m \in \Phi^+$  such that $\alpha \in \sum_{i=1}^m \Z_{>0}\beta_i$, there exists $k$ with $1 \le k \le m$ such that $\beta_k \in \Psi$.
\end{definition}

The definition above was made by inspiration of the following lemma, which is an important result in \cite{ATY20}.
 \begin{lemma}
\label{lem:ideal-crucial}
Let $A  \subseteq P^\diamondsuit$ be an alcove.  
If there exist $\alpha\in \Phi^+,r_\alpha \in \Z$ so that $A^\diamondsuit \cap H_{\alpha,r_\alpha}= \bigcap_{j=1}^m H_{\beta_j, r_{\beta_j}}\cap  A^\diamondsuit$ is a  face   of $A^\diamondsuit$,  where $H_{\beta_j, r_{\beta_j}}$ are the ceilings of $A$, then
$\alpha \in \sum_{j=1}^m \Z_{\ge0}\beta_j$. 
\end{lemma}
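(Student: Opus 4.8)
The plan is to reduce this geometric statement to the standard fact that a positive root is a non-negative integer combination of the simple roots of a sub-root system cut out by the face $F:=\bigcap_{j=1}^m H_{\beta_j,r_{\beta_j}}\cap A^\diamondsuit$. First I would extract a purely linear-algebraic relation. Since the closed alcove $\overline{A}$ is a simplex, the codimension-$m$ face $F$ lies on exactly the $m$ facets supported by the ceilings $H_{\beta_1,r_{\beta_1}},\ldots,H_{\beta_m,r_{\beta_m}}$, whose normals $\beta_1,\ldots,\beta_m$ are linearly independent; in particular $F$ affinely spans the $(\ell-m)$-dimensional subspace $\bigcap_{j=1}^m H_{\beta_j,r_{\beta_j}}$. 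As $F\subseteq H_{\alpha,r_\alpha}$ as well, the affine form $(\alpha,\cdot)-r_\alpha$ vanishes on the common zero set of the independent affine forms $(\beta_j,\cdot)-r_{\beta_j}$, so there are unique real scalars $c_1,\ldots,c_m$ with $\alpha=\sum_{j=1}^m c_j\beta_j$ and $r_\alpha=\sum_{j=1}^m c_j r_{\beta_j}$. It remains to prove $c_j\in\Z_{\ge 0}$.

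Next I would fix the orientation and obtain non-negativity. Because each $H_{\beta_j,r_{\beta_j}}$ is a ceiling, the open alcove $A$ lies on the side $(\beta_j,x)<r_{\beta_j}$, so $+\beta_j$ is the outward normal of the corresponding facet, and the outward normal cone of $\overline{A}$ at $F$ equals $\sum_{j=1}^m \R_{\ge 0}\beta_j$. Since the intersection of $H_{\alpha,r_\alpha}$ with $A^\diamondsuit$ is the boundary face $F$, we have $A\cap H_{\alpha,r_\alpha}=\emptyset$, so $H_{\alpha,r_\alpha}$ supports $\overline{A}$ along $F$ and either $\alpha$ or $-\alpha$ lies in that normal cone. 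The case $-\alpha\in\sum_j\R_{\ge 0}\beta_j$ is excluded by a positivity argument: it would give $\alpha=\sum_j c_j\beta_j$ with all $c_j\le 0$, yet pairing against any $x$ in the open fundamental chamber yields $0<(\alpha,x)=\sum_j c_j(\beta_j,x)\le 0$ (each $(\beta_j,x)>0$ and $\alpha\neq 0$), a contradiction. Hence $\alpha\in\sum_{j=1}^m\R_{\ge 0}\beta_j$, so the scalars $c_j$ of the first step are non-negative, and moreover $(\alpha,x)<r_\alpha$ on $A$, i.e. $+\alpha$ too points outward across $F$.

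Finally, to upgrade real non-negativity to integrality, I would pass to the sub-root system $\Phi_F:=\Phi\cap\mathrm{Span}_{\R}(\beta_1,\ldots,\beta_m)$. Both $\alpha$ and the $\beta_j$ belong to $\Phi_F$. The crucial structural input is that the facets of the simplex $\overline{A}$ through $F$ correspond to the simple reflections of the finite parabolic subgroup of the affine Weyl group fixing $F$ pointwise, so that $\{\beta_1,\ldots,\beta_m\}$ is a \emph{base} of $\Phi_F$ for the positive system singled out by a generic outward direction $v$ across $F$. By the previous step both the $\beta_j$ and $\alpha$ pair positively with $v$, so $\alpha$ is a positive root of $\Phi_F$; the standard description of positive roots relative to a base then forces $\alpha=\sum_j c_j\beta_j$ with $c_j\in\Z_{\ge 0}$, and by uniqueness these agree with the scalars already found.

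The main obstacle is the last step: proving that the ceiling normals through $F$ form a base of $\Phi_F$, and not merely a positive spanning set, together with the compatibility of the outward orientation with this simple system. This is exactly the local Coxeter-theoretic structure of alcoves at a face, and making it precise (identifying the parabolic stabilizer of $F$ and its simple reflections with the facets of $\overline{A}$ containing $F$) is where the genuine work lies; the first two steps are elementary convex geometry and the positivity of the fundamental chamber.
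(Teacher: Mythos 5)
The paper itself does not contain a self-contained proof of this lemma (it cites the proof of Theorem 4.16 in \cite{ATY20}), so your attempt must be judged on its own merits. Your first two steps are sound: the affine linear algebra giving $\alpha=\sum_j c_j\beta_j$, $r_\alpha=\sum_j c_jr_{\beta_j}$, and the normal-cone/positivity argument forcing $c_j\ge 0$, are correct convex geometry. The genuine gap is in Step 3: it is \emph{false} in general that $\{\beta_1,\ldots,\beta_m\}$ is a base of $\Phi_F:=\Phi\cap\mathrm{Span}_{\R}(\beta_1,\ldots,\beta_m)$, i.e.\ the root system of the parabolic stabilizer of the face is \emph{not} $\Phi\cap\mathrm{Span}_{\R}(\beta_j)$ but can be strictly smaller. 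Concretely, in type $B_2$ with $\Phi^+=\{e_1,e_2,e_1-e_2,e_1+e_2\}$, take the alcove $A=\{x_1>1,\ x_1-x_2<1,\ x_1+x_2<2\}\subseteq P^\diamondsuit$ (the triangle with vertices $(1,0)$, $(1,1)$, $(3/2,1/2)$). Its ceilings are $H_{e_1-e_2,1}$ and $H_{e_1+e_2,2}$, meeting $A^\diamondsuit$ exactly at the vertex $x^*=(3/2,1/2)$. Here $\mathrm{Span}_{\R}(\beta_1,\beta_2)$ is the whole plane, so $\Phi_F$ is all of $B_2$; the root $e_1=\tfrac{1}{2}(\beta_1+\beta_2)$ lies in $\Phi_F$ and pairs positively with every outward direction, yet $e_1\notin\Z_{\ge0}\beta_1+\Z_{\ge0}\beta_2$, so $\{\beta_1,\beta_2\}$ is not a base of $\Phi_F$ and your final deduction collapses. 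A telling symptom: your Step 3 never uses the hypothesis $r_\alpha\in\Z$, but the lemma is false without it --- in this same example $H_{e_1,3/2}\cap A^\diamondsuit=\{x^*\}$ satisfies every other hypothesis while the conclusion fails --- so no argument that ignores the integrality of $r_\alpha$ can be correct.

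The repair is to replace the span subsystem by the genuinely local root system: for $x$ a relative interior point of the face, set $\Phi_x:=\{\gamma\in\Phi \mid (\gamma,x)\in\Z\}$, the roots whose affine reflections fix $x$. The hyperplanes of the affine arrangement through $x$ form a translate of the Coxeter arrangement of $\Phi_x$; the alcove $A$ lies in a unique chamber of this local arrangement, whose walls are exactly the walls of $A$ through $x$, namely the $H_{\beta_j,r_{\beta_j}}$; hence $\{\beta_1,\ldots,\beta_m\}$ (the outward normals of a chamber) \emph{is} a base of $\Phi_x$. The integrality hypothesis now enters exactly where it must: $r_\alpha\in\Z$ says precisely that $\alpha\in\Phi_x$, and your Step 2 orientation makes $\alpha$ positive for this base, so the standard description of positive roots gives $\alpha\in\sum_j\Z_{\ge0}\beta_j$. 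In the $B_2$ example, $\Phi_{x^*}=\{\pm(e_1-e_2),\pm(e_1+e_2)\}$ is of type $A_1\times A_1$, for which $\{\beta_1,\beta_2\}$ is indeed a base, and $e_1\notin\Phi_{x^*}$, so no contradiction arises. Note that in type $A$ one can check $\Phi_x=\Phi\cap\mathrm{Span}_{\R}(\beta_j)$, so your argument happens to be valid in the case this paper mainly exploits; but the lemma is stated, and used in Theorem \ref{thm:ideal-strong-com}, for arbitrary irreducible root systems, where your Step 3 as written fails.
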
 
\begin{proof} 
 See  \cite[Proof of Theorem 4.16]{ATY20}. 
\end{proof}

Let $\I$ be the set of all ideals of $\Phi^+$. 
In addition, let $\scC$ (resp,. $\SC$) be the set of all compatible  (resp., strongly compatible) sets of $\Phi^+$. 
We exhibit a relation between these sets (see also Figure \ref{fig:relation-arr} for an illustration).
 \begin{theorem}
\label{thm:ideal-strong-com}
If $\Phi$ is an irreducible root system, then
$$\I \subseteq \SC \subseteq \scC.$$
 \end{theorem}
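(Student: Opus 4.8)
The plan is to establish the two inclusions $\I \subseteq \SC$ and $\SC \subseteq \scC$ separately. The first is a purely combinatorial statement about the root order, while the second is where the alcove geometry enters, through Lemma~\ref{lem:ideal-crucial}.

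For $\I \subseteq \SC$, let $I$ be an ideal and fix $\alpha \in I$ together with positive roots $\beta_1,\dots,\beta_m$ and integers $c_1,\dots,c_m \ge 1$ with $\alpha = \sum_{i=1}^m c_i\beta_i$. The key observation is that $\alpha$ dominates each summand in the root order: for any fixed $k$ we have $\alpha - \beta_k = (c_k-1)\beta_k + \sum_{i\ne k} c_i\beta_i$, which is a nonnegative-integer combination of positive roots and hence lies in $\sum_{j=1}^\ell \Z_{\ge 0}\alpha_j$, so $\alpha \ge \beta_k$. Since $I$ is an ideal and $\alpha \in I$, its defining property forces $\beta_k \in I$; thus in fact every $\beta_k$ lies in $I$, and strong compatibility holds (with any choice of $k$).

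For $\SC \subseteq \scC$, suppose $\Psi$ is strongly compatible, fix an alcove $A \subseteq P^\diamondsuit$, a root $\alpha \in \Psi$, and an integer $m_\alpha$ with $A^\diamondsuit \cap H_{\alpha,m_\alpha} \ne \emptyset$; I must exhibit a ceiling of $A$ through which this intersection factors. First I would argue geometrically that this intersection is a face of $A^\diamondsuit$ cut out by ceilings: because $\alpha \in \Phi^+$, the hyperplane $H_{\alpha,m_\alpha}$ is one of the hyperplanes defining the alcove decomposition, so it is disjoint from the open alcove $A$ and therefore supports a face of the closed simplex lying entirely in the ceiling part of $A^\diamondsuit$. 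Writing this face as $\bigcap_{j=1}^m H_{\beta_j,r_{\beta_j}} \cap A^\diamondsuit$, with the $H_{\beta_j,r_{\beta_j}}$ being the ceilings containing it, puts us exactly in the hypothesis of Lemma~\ref{lem:ideal-crucial}, which yields $\alpha \in \sum_{j=1}^m \Z_{\ge 0}\beta_j$. Discarding the summands with zero coefficient (at least one survives since $\alpha \ne 0$) rewrites $\alpha$ as a strictly positive integer combination of a subset of the ceiling roots, and strong compatibility then produces some such $\beta_k \in \Psi$. As the face is contained in the ceiling $H_{\beta_k,r_{\beta_k}}$ with $\beta_k \in \Psi$, the condition of Definition~\ref{def:compatibleW} is verified.

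The main obstacle I anticipate is the geometric step in the second inclusion: carefully identifying the nonempty intersection $A^\diamondsuit \cap H_{\alpha,m_\alpha}$ as a face expressible as an intersection of ceilings, so that Lemma~\ref{lem:ideal-crucial} genuinely applies. This requires knowing that points of $A^\diamondsuit$ off the open alcove lie on ceiling facets, and that a supporting hyperplane meeting the upper closure does so along such a face. A secondary technical point is bridging the gap between the $\Z_{\ge 0}$-combination delivered by the lemma and the $\Z_{>0}$-combination demanded by Definition~\ref{def:strongly-compatible}, which the discarding maneuver handles once one checks that a positive coefficient remains.
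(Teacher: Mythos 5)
Your proposal is correct and follows essentially the same route as the paper: the inclusion $\I \subseteq \SC$ is handled by the elementary observation that $\alpha \ge \beta_k$ in the root order whenever $\alpha$ is a positive integer combination involving $\beta_k$ (the paper dismisses this as ``clear''), and the inclusion $\SC \subseteq \scC$ is deduced from Lemma~\ref{lem:ideal-crucial} exactly as the paper intends, with your discussion of the geometric step (a nonempty intersection $A^\diamondsuit \cap H_{\alpha,m_\alpha}$ avoids the open alcove, hence is a face cut out by ceilings) and the passage from $\Z_{\ge 0}$- to $\Z_{>0}$-coefficients supplying details the paper leaves implicit.
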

  \begin{proof} 
The first inclusion is clear. 
  The second inclusion follows from Lemma \ref{lem:ideal-crucial}. 
\end{proof}

 \begin{remark}
\label{rem:counter-ex}
Example \ref{ex:Claw} illustrates a strongly compatible set but not an ideal (w.r.t. any positive system of $\Phi$). 
There exists a  compatible set that is not strongly compatible when $\Phi$ is of type $G_2$ (or $B_2$) \cite[Example 4.18(d)]{ATY20}. 
\end{remark}

For any alcove $A$ and  $\gamma\in \Phi^+$, there exists a unique integer $r$ with $r-1 < (x,\gamma) < r$ for all $x \in A$. We denote this integer by $r(A,\gamma)$.
 \begin{lemma}
\label{lem:shi}
Suppose that for each $\gamma\in \Phi^+$ we are given a positive integer $r_\gamma$. 
There is an alcove $A$ with $r(A,\gamma)=r_\gamma$ for all  $\gamma\in \Phi^+$
if and only if $r_\gamma+r_{\gamma'}-1 \le r_{\gamma+\gamma'} \le r_\gamma+r_{\gamma'}$ whenever $\gamma,\gamma',\gamma+\gamma'\in \Phi^+$.
\end{lemma}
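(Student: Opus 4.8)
The plan is to produce the alcove explicitly as the region
$$R=\{x\in V \mid r_\gamma-1<(x,\gamma)<r_\gamma \text{ for all } \gamma\in\Phi^+\},$$
and to reduce the whole statement to deciding when $R$ is nonempty. The key preliminary observation is that on $R$ no value $(x,\gamma)$ is an integer, so $R$ meets none of the hyperplanes $H_{\gamma,m}$ ($\gamma\in\Phi^+$, $m\in\Z$); being open and convex, $R$ therefore lies inside a single connected component of the complement, i.e. a single alcove, and conversely any alcove realizing the data $r(\cdot,\gamma)=r_\gamma$ is contained in $R$. Hence $R$ is itself an alcove as soon as $R\neq\emptyset$, with $r(R,\gamma)=r_\gamma$ for all $\gamma$, and an alcove with the prescribed levels exists if and only if the system of strict inequalities defining $R$ is feasible.

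For the forward implication I fix a point $x$ in the given alcove. Adding the two defining inequalities for $\gamma$ and $\gamma'$ yields $r_\gamma+r_{\gamma'}-2<(x,\gamma+\gamma')<r_\gamma+r_{\gamma'}$, while the inequalities for $\gamma+\gamma'$ give $r_{\gamma+\gamma'}-1<(x,\gamma+\gamma')<r_{\gamma+\gamma'}$. The two open intervals, both with integer endpoints, share the value $(x,\gamma+\gamma')$ and hence overlap, which forces $r_\gamma+r_{\gamma'}-1\le r_{\gamma+\gamma'}\le r_\gamma+r_{\gamma'}$.

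For the converse I would show feasibility of the system by a theorem of the alternative (Motzkin's transposition theorem). Writing $s_\gamma:=r_\gamma-1\ge 0$, infeasibility would provide nonnegative integers $a_\gamma,b_\gamma$ (not all zero, after clearing denominators) with $\sum_\gamma a_\gamma\gamma=\sum_\gamma b_\gamma\gamma$ and $\sum_\gamma a_\gamma(s_\gamma+1)\le\sum_\gamma b_\gamma s_\gamma$. Splitting the linear relation into its positive and negative parts exhibits one vector $v$ in the positive root monoid written as two sums of positive roots, and the hypothesis, rewritten as $s_\gamma+s_{\gamma'}\le s_{\gamma+\gamma'}\le s_\gamma+s_{\gamma'}+1$, controls exactly how the total $s$-weight and the number of summands of such a decomposition change under one root addition $\gamma+\gamma'\in\Phi^+$: each merge raises the $s$-weight by a defect in $\{0,1\}$ and lowers the count by one. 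The goal is to compare the two decompositions of $v$ so that these pairwise bounds telescope to $\sum_\gamma a_\gamma(s_\gamma+1)>\sum_\gamma b_\gamma s_\gamma$, contradicting the certificate.

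The main obstacle is this last comparison. As the small rank-three instances already show, a naive sequence of merges and splits gives bounds that are too weak; one must instead match the two decompositions of $v$ against a common refinement and apply the inequalities $s_\gamma+s_{\gamma'}\le s_{\gamma+\gamma'}\le s_\gamma+s_{\gamma'}+1$ along root additions that align with both sides simultaneously. Establishing that such an aligned sequence of moves always exists is where the root-system axioms and the irreducibility of $\Phi$ enter; once it is available, the bookkeeping on $s$-weight versus summand count closes the argument. (A rank induction, restricting to the parabolic subsystem on $\alpha_1,\dots,\alpha_{\ell-1}$ and then fixing the last coordinate to meet all constraints involving $\alpha_\ell$, is an alternative route, with the consistency of the one-dimensional constraints again reducing to the pairwise inequalities.)
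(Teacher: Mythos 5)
Your reduction of the lemma to feasibility of the open region $R=\{x\in V \mid r_\gamma-1<(x,\gamma)<r_\gamma \text{ for all } \gamma\in\Phi^+\}$ is sound, and your forward implication is complete and correct: a point of the alcove lies in two open intervals with integer endpoints, and their overlap forces $r_\gamma+r_{\gamma'}-1 \le r_{\gamma+\gamma'} \le r_\gamma+r_{\gamma'}$. But this is the easy half. The converse --- the substantive content of the lemma --- is not proved. After Motzkin's transposition theorem you are left with the claim that the pairwise inequalities exclude every certificate $\sum_\gamma a_\gamma\gamma=\sum_\gamma b_\gamma\gamma$ with $\sum_\gamma a_\gamma(s_\gamma+1)\le\sum_\gamma b_\gamma s_\gamma$; equivalently, that for any two multiset decompositions $M_1,M_2$ of the same vector into positive roots one has $\sum_{\gamma\in M_1}s_\gamma < \sum_{\gamma\in M_2}s_\gamma + |M_2|$. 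You concede that the naive bookkeeping fails, and it does: if $M_1$ is a coarsening of $M_2$ the merge count is at most $|M_2|-1$ and the bound follows, but in general one needs splits as well, and passing through a common refinement $M_0$ only gives $\sum_{M_1}s_\gamma-\sum_{M_2}s_\gamma\le |M_0|-|M_1|$, which is not $<|M_2|$ (already $v$ the highest root of $A_2$, $M_1=M_2=\{v\}$, $M_0=\{\alpha,\beta\}$ makes this non-strict). The missing ingredient --- an ``aligned'' sequence of moves along which the defects in $\{0,1\}$ telescope coherently against both decompositions --- is exactly the hard direction of Shi's theorem; saying that ``the root-system axioms and irreducibility enter'' there is a placeholder, not an argument. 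The parenthetical rank-induction sketch has the same status: one-variable Helly applies only after all remaining coordinates are fixed, and showing the constraints involving $\alpha_\ell$ can then be met is again the full strength of the statement, not a reduction of it.

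For comparison, the paper does not attempt a proof at all: it quotes Shi's theorem \cite[Theorem 5.2]{Shi87} in its coroot formulation and the root formulation in \cite[Lemma 2.4]{A05}. So a genuinely self-contained proof along your lines would be a real addition, but what you have is an outline whose decisive combinatorial step is explicitly left open; as it stands, the proposal does not establish the lemma.
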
 
\begin{proof} 
This was first proved by Shi in terms of coroots \cite[Theorem 5.2]{Shi87}. 
The statement here is formulated in terms of roots, which can be found in, e.g., \cite[Lemma 2.4]{A05}.
\end{proof}

Surprisingly, there is no difference between compatible and strongly compatible sets in the case of type $A$.
 \begin{theorem}
\label{thm:C=SC-A}
If $\Phi$ is of type $A_\ell$, then every compatible set is strongly compatible, i.e.,
$$\SC= \scC.$$
 \end{theorem}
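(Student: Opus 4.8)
Since Theorem \ref{thm:ideal-strong-com} already supplies $\SC\subseteq\scC$, the plan is to prove the reverse inclusion $\scC\subseteq\SC$ for type $A_\ell$ by contraposition: assuming $\Psi$ is \emph{not} strongly compatible, I will manufacture an alcove witnessing that $\Psi$ is not compatible. I work in coordinates $\epsilon_1,\dots,\epsilon_{\ell+1}$, so $\Phi^+=\{\epsilon_p-\epsilon_q\mid 1\le p<q\le \ell+1\}$, I abbreviate $r_{pq}:=r(A,\epsilon_p-\epsilon_q)$, and I use the standard description $A^\diamondsuit=\{x\mid r_\gamma-1<(\gamma,x)\le r_\gamma\ \text{for all }\gamma\in\Phi^+\}$ of the upper closure.

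The first step is a combinatorial reduction to a single forbidden triple. Suppose $\Psi$ is not strongly compatible, so there are $\alpha=\epsilon_a-\epsilon_b\in\Psi$ and positive roots $\beta_1,\dots,\beta_m\notin\Psi$ with $\alpha\in\sum_i\Z_{>0}\beta_i$. Reading each $\beta_i=\epsilon_{s_i}-\epsilon_{t_i}$ as an edge $s_i\to t_i$ (with $s_i<t_i$), the relation $\alpha=\sum_i c_i\beta_i$ is an integral flow of value one from $a$ to $b$ on an acyclic graph, so a standard flow/path-decomposition argument yields a directed path $a=u_0<u_1<\dots<u_p=b$ all of whose edges $\epsilon_{u_{t-1}}-\epsilon_{u_t}$ occur among the $\beta_i$ and hence lie outside $\Psi$. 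Choosing a pair $s^*<t^*$ with $\epsilon_{u_{s^*}}-\epsilon_{u_{t^*}}\in\Psi$ and $t^*-s^*$ minimal (the pair $(0,p)$ is admissible, and minimality forces $t^*-s^*\ge 2$), any intermediate index $m$ with $s^*<m<t^*$ produces $i:=u_{s^*}<j:=u_m<k:=u_{t^*}$ with $\epsilon_i-\epsilon_k\in\Psi$ and $\epsilon_i-\epsilon_j,\epsilon_j-\epsilon_k\notin\Psi$. Writing $\mu:=\epsilon_i-\epsilon_j$, $\nu:=\epsilon_j-\epsilon_k$ and $\alpha:=\epsilon_i-\epsilon_k=\mu+\nu$, it now suffices to build from this triple an alcove contradicting compatibility.

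The heart of the argument is the construction of that alcove. I would pick distinct reals $t_1,\dots,t_{\ell+1}\in[0,1)$ with $t_p<t_i<t_j<t_k$ for every $p\notin\{i,j,k\}$, and set $x_p:=-c_p+t_p$, where the integers $c_p$ are fixed by $c_{p+1}-c_p=1$ if $t_p<t_{p+1}$ and $c_{p+1}-c_p=0$ if $t_p>t_{p+1}$. A direct check gives $x_p-x_{p+1}\in(0,1)$ for all $p$, so the alcove $A$ containing $x$ has $r(A,\alpha_p)=1$ and thus $A\subseteq P^\diamondsuit$; moreover $r_{pq}=(c_q-c_p)+[\,t_p>t_q\,]$ for $p<q$, where $[\,t_p>t_q\,]$ equals $1$ if $t_p>t_q$ and $0$ otherwise. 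Since $t_i<t_j<t_k$ this gives the maximal splitting $r_{ik}=r_{ij}+r_{jk}$, whence on $A^\diamondsuit$ the bound $(\alpha,x)\le r_{ik}$ is attained exactly where $(\mu,x)=r_{ij}$ and $(\nu,x)=r_{jk}$, i.e.
\[
F:=A^\diamondsuit\cap H_{\alpha,r_{ik}}=A^\diamondsuit\cap H_{\mu,r_{ij}}\cap H_{\nu,r_{jk}}.
\]
Letting $t_i,t_j,t_k$ coincide (keeping the remaining coordinates) yields a point of $F$, so $F\neq\emptyset$. Placing $t_i,t_j,t_k$ above all other fractional parts forces $F$ to have codimension exactly two: for $i<m<j$ one computes $r_{im}+r_{mj}=r_{ij}+1$ (and symmetrically for $j<m<k$), so no intermediate constraint is pinned, and at the chosen point every constraint except those indexed by roots $\epsilon_p-\epsilon_q$ with $\{p,q\}\subseteq\{i,j,k\}$ (namely $\mu,\nu,\alpha$) is strict. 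Hence the affine span of $F$ is the codimension-two space $H_{\mu,r_{ij}}\cap H_{\nu,r_{jk}}$, whose normal space is $\operatorname{span}\{\mu,\nu\}$.

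To finish, suppose $\Psi$ were compatible. Since $\alpha\in\Psi$ and $F=A^\diamondsuit\cap H_{\alpha,r_{ik}}\ne\emptyset$, the face $F$ would be contained in a ceiling $H_{\beta,m}$ of $A$ with $\beta\in\Psi$. As $\dim F=\ell-2$ equals the dimension of its affine span, the inclusion $F\subseteq H_{\beta,m}$ forces $\beta$ constant on that span, i.e. $\beta\in\operatorname{span}\{\mu,\nu\}$; the only positive roots there are $\mu,\nu,\alpha$, and as $\mu,\nu\notin\Psi$ we get $\beta=\alpha$ and $m=r_{ik}$. But then $H_{\alpha,r_{ik}}$ is a ceiling, hence a facet of $A$, so $A^\diamondsuit\cap H_{\alpha,r_{ik}}$ would be $(\ell-1)$-dimensional, contradicting $\dim F=\ell-2$. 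Thus $\Psi$ is not compatible, giving $\scC\subseteq\SC$ and therefore $\SC=\scC$. The step I expect to be the main obstacle is controlling $\dim F$: forcing $(\mu,x)=r_{ij}$ and $(\nu,x)=r_{jk}$ is easy, but one must ensure this does not simultaneously pin intermediate differences, which would shrink the affine span of $F$ and admit extra roots—possibly lying in $\Psi$—that contain it. Choosing $t_i<t_j<t_k$ as the three largest fractional parts is exactly the device that keeps $F$ at codimension two with normal space $\operatorname{span}\{\mu,\nu\}$.
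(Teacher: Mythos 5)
Your proof is correct, and it reaches the conclusion by a genuinely different route than the paper. The paper argues the inclusion $\scC \subseteq \SC$ directly: given a compatible $\Psi$, a root $\alpha \in \Psi$ and an arbitrary decomposition $\alpha = \sum_{i=1}^m \beta_i$, it assigns to every $\gamma \in \Phi^+$ an integer $r_\gamma$ counting how many blocks of the induced partition of $\Delta$ meet $\supp(\gamma)$, invokes Shi's criterion (Lemma \ref{lem:shi}) twice --- once to produce an alcove $A \subseteq P^\diamondsuit$ with $r(A,\gamma)=r_\gamma$, and once more to certify that each $H_{\beta_i,1}$ is a wall, hence a ceiling --- and then uses the fact that a proper face of a polytope is the intersection of all facets containing it to identify $H_{\alpha,m}\cap A^\diamondsuit$ with $\bigcap_{i=1}^m H_{\beta_i,1}\cap A^\diamondsuit$, so that compatibility forces some $\beta_k \in \Psi$. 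You instead argue the contrapositive, and you first perform a combinatorial reduction the paper does not make inside this proof: by flow decomposition and a minimal-gap argument, any violation of strong compatibility yields one of length two, i.e.\ a triple $i<j<k$ with $\epsilon_i-\epsilon_k\in\Psi$ and $\epsilon_i-\epsilon_j,\,\epsilon_j-\epsilon_k\notin\Psi$ (this is essentially the umbrella-free combinatorics that the paper keeps separate, in Corollary \ref{cor:c-A} and Lemma \ref{lem:cocomp-characterize1}). You then build the alcove explicitly from prescribed fractional parts rather than through Lemma \ref{lem:shi}, and you close with a dimension count (the face $F$ affinely spans a codimension-two flat whose only positive roots are $\mu$, $\nu$, $\mu+\nu$) instead of the face-lattice fact. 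The underlying geometric mechanism is the same in both proofs --- choose $A\subseteq P^\diamondsuit$ realizing the maximal splitting $r(A,\alpha)=\sum_i r(A,\beta_i)$, which pins $A^\diamondsuit\cap H_{\alpha,r(A,\alpha)}$ inside the $\beta$-hyperplanes --- but your version is more self-contained (no Shi's lemma, no quoted polytope theory) at the price of the extra reduction step, while the paper's argument handles all $m$ uniformly and is shorter because it leans on the quoted results. Two cosmetic points: your point $x$ should be translated into the sum-zero subspace $U$ (harmless, since every root only sees coordinate differences), and your half-open description of $A^\diamondsuit$ is exactly the one the paper's own proof uses implicitly when it speaks of the non-empty face $\bigcap_{i=1}^m H_{\beta_i,1}\cap A^\diamondsuit$.
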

  \begin{proof} 
 
Let  $\Psi \subseteq \Phi^+$ be a  compatible set. 
Let $\alpha \in \Psi$, and suppose that there are $\beta_1, \ldots, \beta_m \in \Phi^+$  such that $\alpha \in \sum_{i=1}^m \Z_{>0}\beta_i$. 
Thus, $\alpha=\sum_{i=1}^m \beta_i$ = $\alpha_s+\alpha_{s+1}+\ldots+\alpha_t$ for $1 \le s \le t \le \ell$. 
We want to show that there exists $k$ with $1 \le k \le m$ such that $\beta_k \in \Psi$.

For $\beta= \sum_{i=1}^\ell d_i \alpha_i\in   \Phi^+$, denote $\supp(\beta):=\{\alpha_i \mid d_i>0\}$. 
The above expression of $\alpha$ induces a partition of $\Delta$ as follows:
$$\Delta = S_0 \sqcup S_1 \sqcup \cdots \sqcup S_m \sqcup S_{m+1},$$
where $S_0 := \{\alpha_1, \ldots, \alpha_{s-1}\}$, $S_{m+1} := \{\alpha_{t+1}, \ldots, \alpha_{\ell}\}$, $S_i := \supp(\beta_i)$ for $1 \le i \le m$.
For each $\gamma\in \Phi^+$, define
$$r_\gamma:=\#\{0 \le i \le m+1 \mid  \supp(\gamma) \cap S_i \ne \emptyset\}.$$
For any $\gamma,\gamma'\in \Phi^+$ with $\gamma+\gamma'\in \Phi^+$, it is not hard to show the following facts:
\begin{enumerate}[(a)]
\item  $r_{\gamma+\gamma'} = r_\gamma+r_{\gamma'}$ if there is no $i$ with $0 \le i \le m+1$ such that $\supp(\gamma) \cap S_i\ne \emptyset$ and $\supp(\gamma') \cap S_i\ne \emptyset$,
\item  $r_{\gamma+\gamma'} = r_\gamma+r_{\gamma'}-1$, otherwise.
\end{enumerate}
By Lemma \ref{lem:shi}, there is an alcove $A$ with $r(A,\gamma)=r_\gamma$ for all  $\gamma\in \Phi^+$. 
In particular, $r(A,\alpha)=m$ and $r(A,\beta_i)=1$ for $1 \le i \le m$.
Clearly, $A \subseteq P^\diamondsuit$.

Fix $ \beta_i$ with $1 \le i \le m$. 
We will show that $H_{\beta_i,1}$ is a ceiling of $A$. 
The method used in \cite[Theorem 3.11]{A05} or \cite[Theorem 3.1]{Th14} applies here as well.
First, the above formulas imply that 
\begin{enumerate}[(a)]
\item  $r_{\beta_i+\gamma} = r_\gamma+r_{\beta_i}$ if $\gamma,\beta_i+\gamma\in \Phi^+$ and
\item  $r_{\beta_i} = r_\gamma+r_{\gamma'}-1$ if $\beta_i = \gamma+\gamma'$ for $\gamma,\gamma'\in \Phi^+$.
\end{enumerate}
Next, we again apply Lemma \ref{lem:shi} to have an alcove $B$ with $r(B,{\beta_i})=r_{\beta_i}+1=2$, $r(B,\gamma)=r_\gamma$ for all  $\gamma\in \Phi^+\setminus \{\beta_i\}$. 
This implies that $H_{\beta_i,1}$ is a wall, hence a ceiling of $A$. 
Thus, the non-empty face $\bigcap_{i=1}^m H_{\beta_i,1}\cap  A^\diamondsuit$ is contained in the face $H_{\alpha,m}\cap  A^\diamondsuit$ of $A^\diamondsuit$. 
Since any proper face of a polytope is the intersection of all facets containing it, we must have $H_{\alpha,m}\cap  A^\diamondsuit=\bigcap_{i=1}^m H_{\beta_i,1}\cap  A^\diamondsuit$. 
Since $\Psi$ is a  compatible set, by Definition \ref{def:compatibleW}, there exists $k$ with $1 \le k \le m$ such that $\beta_k \in \Psi$. 
This completes the proof.
\end{proof}

Thanks to Theorem \ref{thm:C=SC-A}, when $\Phi$ is of type $A$, the concept of compatibility originally defined by alcoves and affine hyperplanes can now be rephrased in terms of graphs. 
 
\begin{corollary}\label{cor:c-A}   
Let $G = ([\ell],\E)$ be a graph.  The following  are equivalent. 
\begin{enumerate}[(i)] 
\item $\A(G)$ is a compatible graphic arrangement, i.e., $\Psi(G)\subseteq \Phi^+(A_{\ell-1})$ is a compatible set. 
\item $G$ has the following property: if $i   < j$, $\{i,j\}   \in \E $ and $(p_1, \ldots, p_m)$ is any sequence such that $i = p_1 < p_2 <\cdots < p_m=j$, then there exists $p_a$ with $1 \le a < m$ such that $(p_a,p_{a+1})  \in \E $.
    \end{enumerate}
\end{corollary}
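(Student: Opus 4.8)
The plan is to reduce the statement to the already-established equivalence $\SC = \scC$ from Theorem~\ref{thm:C=SC-A}, and then to translate the purely arrangement-theoretic condition of strong compatibility into the combinatorial condition on the graph $G$. The key observation is that $\Psi(G) \subseteq \Phi^+(A_{\ell-1})$ is built from the roots $\epsilon_i - \epsilon_j$ corresponding to edges $\{i,j\} \in \E$, and that for type $A_{\ell-1}$ every positive root has the transparent form $\epsilon_a - \epsilon_b = \alpha_a + \alpha_{a+1} + \cdots + \alpha_{b-1}$ for $a < b$. So the whole proof rests on rewriting the defining condition of Definition~\ref{def:strongly-compatible} in this explicit coordinate language.

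First I would fix notation: the root $\epsilon_i - \epsilon_j$ (with $i < j$) corresponds to the edge $\{i,j\}$, and the simple roots are $\alpha_k = \epsilon_k - \epsilon_{k+1}$. The crucial combinatorial fact is that, for positive roots of type $A$, writing $\alpha = \epsilon_i - \epsilon_j$ as a positive integer combination $\alpha \in \sum_{k=1}^m \Z_{>0}\beta_k$ with $\beta_k \in \Phi^+$ forces (by comparing coefficients of the simple roots, each of which is $1$ in $\alpha$) that each $\beta_k = \epsilon_{p_k} - \epsilon_{p_{k+1}}$ for some strictly increasing sequence of indices with $p_1 = i$ and $p_{m+1} = j$. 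In other words, decompositions of $\epsilon_i - \epsilon_j$ into positive roots are in bijection with the ways of inserting intermediate indices $i = p_1 < p_2 < \cdots < p_{m+1} = j$ and breaking the ``path'' from $i$ to $j$ into consecutive telescoping segments $\epsilon_{p_a} - \epsilon_{p_{a+1}}$. This telescoping bijection is the technical heart of the translation, so I would verify it carefully (both directions: every such index sequence gives a valid decomposition, and every decomposition arises this way).

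With this dictionary in hand, I would prove the two implications (i)$\Leftrightarrow$(ii) directly. By Theorem~\ref{thm:C=SC-A}, $\Psi(G)$ is compatible if and only if it is strongly compatible. Strong compatibility says: for every $\alpha = \epsilon_i - \epsilon_j \in \Psi(G)$ (equivalently, every edge $\{i,j\}$ with $i < j$) and every decomposition $\alpha = \sum_k \beta_k$ into positive roots, some $\beta_k$ lies in $\Psi(G)$. Using the telescoping bijection, each such decomposition is exactly a sequence $i = p_1 < \cdots < p_m = j$ (with $m$ now indexing the intermediate stops, matching the statement), and ``$\beta_k \in \Psi(G)$'' translates precisely to ``the segment $\epsilon_{p_a} - \epsilon_{p_{a+1}}$ is a root coming from an edge,'' i.e.\ $\{p_a, p_{a+1}\} \in \E$. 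Hence strong compatibility of $\Psi(G)$ is logically identical to condition (ii). Matching the indexing conventions (the statement writes the sequence as $(p_1,\dots,p_m)$ with $p_m = j$, whereas the decomposition into $m$ summands introduces $m$ segments among $m+1$ endpoints) is the only bookkeeping subtlety.

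The main obstacle I anticipate is establishing the telescoping bijection with full rigor, in particular arguing that a decomposition of $\epsilon_i - \epsilon_j$ into positive roots cannot involve any root outside the ``interval'' $[i,j]$ nor any root with a simple-root coefficient exceeding one. The cleanest way is to read off the $\epsilon$-coordinates: if $\alpha = \sum_k \beta_k$ and each $\beta_k = \epsilon_{a_k} - \epsilon_{b_k}$, then summing gives $\sum_k (\epsilon_{a_k} - \epsilon_{b_k}) = \epsilon_i - \epsilon_j$, so every index other than $i,j$ must appear as a ``$+$'' exactly as often as a ``$-$''; a counting/pairing argument on these cancellations yields the unique chaining into a single increasing path. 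Once this is secured, the rest is a direct dictionary substitution, so I would keep the remaining prose short and let the bijection carry the argument.
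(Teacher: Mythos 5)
Your proposal is correct and takes essentially the same route as the paper: both reduce compatibility to strong compatibility via Theorem \ref{thm:C=SC-A} (together with $\SC \subseteq \scC$ from Theorem \ref{thm:ideal-strong-com}), and both hinge on the same telescoping fact that a decomposition of $\epsilon_i - \epsilon_j$ into positive roots with positive integer coefficients forces unit coefficients and a chaining into segments $\epsilon_{p_a} - \epsilon_{p_{a+1}}$ along an increasing sequence $i = p_1 < \cdots < p_{m+1} = j$, verified by comparing $\epsilon$-coordinates. The paper carries out this dictionary explicitly only for (ii)$\Rightarrow$(i) and calls (i)$\Rightarrow$(ii) clear, exactly the two directions of your translation.
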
 
\begin{proof} 
The implication $(i) \Rightarrow (ii)$ is clear.
Now we prove $(ii) \Rightarrow (i)$.
Let $\alpha=\epsilon_i - \epsilon_j   \in \Psi(G)$. 
If there are $\beta_1, \ldots, \beta_m\in \Phi^+$ such that $\alpha \in \sum_{i=1}^m \Z_{>0}\beta_i$, then $\alpha = \sum_{\beta \in M} \beta$ where $M=\{\beta_1, \ldots, \beta_m\}$. 
Therefore, $\epsilon_i - \epsilon_j  = \sum_{p \in H}\epsilon_p -  \sum_{q \in T}\epsilon_q$ for  $H, T \subseteq [\ell]$. 
Note that $\epsilon_i - \epsilon_j \ge \epsilon_p - \epsilon_q$ if and only if $i \le p < q \le j$. 
The independence of $\{\epsilon_1, \ldots, \epsilon_{\ell}\}$ implies $H\setminus T= \{i\}$ and $T\setminus H = \{j\}.$
Thus, there exist $p_1, \ldots, p_{m+1} \in \Z_{>0}$ with $i = p_1 < p_2 <\cdots <  p_{m+1} =j$ such that 
$M = \{ \epsilon_{p_{a}}- \epsilon_{p_{a+1}} \mid 1 \le a \le m\}$.
Therefore, there is $p_a$  such that $(p_a,p_{a+1})  \in \E $ hence $ \epsilon_{p_{a}}- \epsilon_{p_{a+1}}  \in \Psi(G) \cap M$.
This completes the proof.
\end{proof}

 \begin{definition}
 \label{def:umbrella-free}   
 An ordering  $v_1 < v_2  < \cdots  <  v_\ell$ of the vertices of a graph $G$ is an \emph{umbrella-free ordering} (or a \emph{cocomparability ordering}) if $ i < k < j$ and $\{v_i, v_j\} \in \E$, then either $\{v_i, v_k\} \in \E$ or $\{v_k, v_j\} \in \E$ or both.
\end{definition}

 \begin{lemma}
 \label{lem:cocomp-characterize1}
 Let  $G = (\V,\E)$ be a graph  with $|\V|=\ell$.  The following  are equivalent. 
\begin{enumerate}[(i)] 
\item $G$ is a cocomparability graph.
\item $G$ has an  umbrella-free ordering.
\item  $G$ has an ordering  $v_1 < v_2  < \cdots  <  v_\ell$ of its vertices such that  if $i   < j$, $\{v_i, v_j\} \in \E$ and $(p_1, \ldots, p_m)$ is any  sequence such that $i = p_1 < p_2 <\cdots < p_m=j$, then there exists $p_a$ with $1 \le a < m$ such that $\{v_{p_a},v_{p_{a+1}}\}  \in \E$. 
In other words,  if we label vertex $v_i$ by $i\in[\ell]$, then $G$ has the property in Corollary \ref{cor:c-A}.
    \end{enumerate}
\end{lemma}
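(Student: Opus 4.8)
The plan is to prove Lemma~\ref{lem:cocomp-characterize1} by establishing the cycle of implications $(i) \Rightarrow (ii) \Rightarrow (iii) \Rightarrow (i)$, since $(i) \Leftrightarrow (ii)$ is the well-known equivalence between cocomparability graphs and the existence of an umbrella-free ordering, while $(ii) \Leftrightarrow (iii)$ is purely combinatorial and where the real content lies. First I would recall the standard fact that a graph $G$ is a cocomparability graph precisely when its complement $\overline{G}$ admits a transitive orientation; I would then translate a transitive orientation of $\overline{G}$ into a linear order on the vertices and verify that this order is umbrella-free. Concretely, given a transitive orientation of $\overline{G}$, one obtains an umbrella-free ordering by taking any linear extension compatible with the orientation (after verifying acyclicity), and conversely an umbrella-free ordering yields a transitive orientation of $\overline{G}$ by orienting each non-edge $\{v_i,v_j\}$ of $G$ (hence edge of $\overline{G}$) from the smaller to the larger index. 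Checking transitivity of this orientation is exactly the contrapositive of the umbrella-free condition: if $v_i v_j$ and $v_j v_k$ are non-edges with $i<j<k$, then the umbrella-free property forces $v_i v_k$ to be a non-edge as well.

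The heart of the lemma is the equivalence $(ii) \Leftrightarrow (iii)$. For $(iii) \Rightarrow (ii)$, I would simply specialize the sequence condition in $(iii)$ to the three-term sequence $(i,k,j)$ whenever $i < k < j$ and $\{v_i,v_j\} \in \E$: the hypothesis then produces an index $p_a \in \{i,k\}$ with $\{v_{p_a}, v_{p_{a+1}}\} \in \E$, which says exactly that $\{v_i,v_k\} \in \E$ or $\{v_k,v_j\} \in \E$, i.e.\ the umbrella-free condition of Definition~\ref{def:umbrella-free}. The nontrivial direction is $(ii) \Rightarrow (iii)$, which upgrades the three-term umbrella-free condition to the general chain condition on arbitrary increasing sequences $i = p_1 < \cdots < p_m = j$. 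I would argue this by induction on the length $m$ of the sequence. The base case $m=2$ is vacuous since $\{v_i,v_j\}=\{v_{p_1},v_{p_2}\}\in\E$ directly. For the inductive step, assuming $\{v_i,v_j\}\in\E$ with $i<j$, I would pick any intermediate index $p_a$ (say the second term $p_2$, so $i < p_2 < j$) and apply the umbrella-free property to the triple $i < p_2 < j$: this yields either $\{v_i,v_{p_2}\}\in\E$ — in which case $(p_1,p_2)$ is the desired consecutive edge — or $\{v_{p_2},v_j\}\in\E$, in which case I apply the induction hypothesis to the shorter sequence $(p_2, p_3, \ldots, p_m)$ to locate a consecutive edge among the remaining terms.

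The main obstacle I anticipate is ensuring the induction in $(ii)\Rightarrow(iii)$ is set up so that the umbrella-free hypothesis is applicable at each stage, and that the case split is genuinely exhaustive. The subtlety is that the umbrella-free property only guarantees an edge to \emph{one of the two immediate neighbors of the splitting vertex}, not to $p_2$ specifically in the original chain; so I must phrase the inductive hypothesis carefully as ``for every edge $\{v_a,v_b\}$ with $a<b$ and every increasing chain from $a$ to $b$ of length $< m$, some consecutive pair is an edge,'' and feed it the subchain starting at the point where the umbrella-free split relocates the edge. I would keep the remaining verifications — namely the two directions of $(i)\Leftrightarrow(ii)$, which amount to citing the transitive-orientation characterization of cocomparability graphs and checking the routine transitivity/acyclicity bookkeeping — brief, since they are standard and the paper treats them as folklore. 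The final sentence of $(iii)$, identifying the condition with the property in Corollary~\ref{cor:c-A} under the relabeling $v_i \mapsto i$, is then immediate by inspection and requires no separate argument.
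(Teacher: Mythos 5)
Your proposal is correct and takes essentially the same approach as the paper: $(i)\Leftrightarrow(ii)$ is cited as standard, $(iii)\Rightarrow(ii)$ is the trivial specialization to three-term sequences, and the real content $(ii)\Rightarrow(iii)$ is obtained by iterating the umbrella-free condition along the chain. Your induction, which splits at $p_2$ and recurses on the subchain $(p_2,\ldots,p_m)$, is just the mirror image of the paper's contradiction argument, which fixes $p_1$ and applies the umbrella-free property at $p_{m-1},p_{m-2},\ldots$ to force $\{v_{p_1},v_{p_3}\}\in\E$ and then a contradiction at $p_2$.
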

\begin{proof} 
The equivalence $(i) \Leftrightarrow (ii)$ is well known, e.g., \cite[\S2]{KS93}. 
We prove the equivalence $(ii) \Leftrightarrow (iii)$ by showing that ``$<$" is an umbrella-free ordering if and only if ``$<$"  satisfies the condition in (iii). 
The implication ($\Leftarrow$) is obvious. 
To prove ($\Rightarrow$), suppose to the contrary that $\{v_{p_a},v_{p_{a+1}}\}  \notin \E$ for all $1 \le a < m$. 
Since $p_1 < p_{m-1} < p_m$, we must have $\{v_{p_1},v_{p_{m-1}}\}  \in \E$. 
Repeating yields $\{v_{p_1},v_{p_{3}}\}  \in \E$ with  $p_1 < p_{2} < p_3$, $\{v_{p_1},v_{p_{2}}\} \notin \E$, $\{v_{p_2},v_{p_{3}}\}  \notin \E$. 
This is a contradiction. 
 \end{proof}

\begin{remark}
\label{rem:fis}
Comparability and cocomparability graphs also have forbidden induced subgraph characterizations, see, e.g.,  \cite{Ga67, Tr92}.
\end{remark}

We are now in position to prove our main result.
\begin{proof}[\textbf{Proof of Theorem \ref{thm:cocomp-characterize}}]
The equivalence $(i) \Leftrightarrow (ii)$ follows from Corollary \ref{cor:c-A}. 
The equivalence $(ii) \Leftrightarrow (iii)$ follows from Lemma \ref{lem:cocomp-characterize1}.
 \end{proof}
 
 \begin{remark}
\label{rem:unlike-free}
By \cite[Theorem 3.3]{ER94}, a graph $G$ is chordal if and only if  $\A(G)$ is free (= supersolvable) for \emph{any} labeling of $G$ using $[\ell]$ (since if $G$ and $G'$ are isomorphic, then $\A(G)$ and $\A(G')$ have isomorphic intersection lattices). 
Given a cocomparability graph $G$, Theorem \ref{thm:cocomp-characterize}, however, can only tell the existence of a labeling that makes $\A(G)$ compatible  (see Examples \ref{ex:G} and \ref{ex:G'}). 
\end{remark}

As a consequence, we obtain a characterization of  graphic arrangements that are compatible and free.
\begin{corollary}
 \label{cor:interval}
 Let  $G = (\V,\E)$ be a graph with $|\V|=\ell$.  The following  are equivalent. 
\begin{enumerate}[(i)] 
\item $G$ has a labeling using $[\ell]$ so that $\A(G)$ is a compatible and free graphic arrangement. 
\item  $G$ has an ordering  $v_1    < \cdots  < v_\ell$ of its vertices such that  if $ i < k < j$ and $\{v_i, v_j\} \in \E$, then $\{v_i, v_k\} \in \E$.
\item $G$ is an interval graph.
    \end{enumerate}
\end{corollary}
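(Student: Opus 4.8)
The plan is to reduce the corollary to the main theorem together with the two classical dictionary entries recalled in the introduction, treating the equivalences $(i) \Leftrightarrow (iii)$ and $(ii) \Leftrightarrow (iii)$ separately. For the first, I would exploit the asymmetry between the two properties appearing in $(i)$: by \cite{St72, ER94} (see also Remark \ref{rem:unlike-free}), freeness of $\A(G)$ does not depend on the chosen labeling and holds exactly when $G$ is chordal; by contrast, by Theorem \ref{thm:cocomp-characterize}, compatibility of $\A(G)$ does depend on the labeling, and the existence of \emph{some} labeling making $\A(G)$ compatible is equivalent to $G$ being a cocomparability graph. For the second equivalence I would argue directly, producing an explicit interval representation from the ordering and vice versa.

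To prove $(i) \Leftrightarrow (iii)$, suppose first that $G$ admits a labeling by $[\ell]$ making $\A(G)$ compatible and free. Compatibility forces $G$ to be a cocomparability graph by Theorem \ref{thm:cocomp-characterize}, while freeness forces $G$ to be chordal by \cite{St72, ER94}; hence $G$ is both, and therefore an interval graph by \cite{GH64}. Conversely, if $G$ is an interval graph, then it is simultaneously chordal and cocomparability by \cite{GH64}. Being cocomparability, $G$ has, by Theorem \ref{thm:cocomp-characterize}, a labeling by $[\ell]$ making $\A(G)$ compatible; being chordal, $\A(G)$ is free for \emph{every} labeling by \cite{St72, ER94} (Remark \ref{rem:unlike-free}). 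In particular the very labeling that achieves compatibility also yields freeness, so $(i)$ holds. The only point requiring care is this last step, namely that a single labeling serves both purposes; this is exactly where the labeling-independence of freeness is essential.

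To prove $(ii) \Leftrightarrow (iii)$, I would argue by direct construction. For $(iii) \Rightarrow (ii)$, fix an interval representation and order the vertices $v_1 < \cdots < v_\ell$ by nondecreasing left endpoints. If $i < k < j$ and $\{v_i, v_j\} \in \E$, then the left endpoint of $v_k$ lies between those of $v_i$ and $v_j$; since the intervals of $v_i$ and $v_j$ meet, the right endpoint of $v_i$ is at least the left endpoint of $v_j$, hence at least that of $v_k$, so the intervals of $v_i$ and $v_k$ meet and $\{v_i, v_k\} \in \E$. For $(ii) \Rightarrow (iii)$, given an ordering satisfying $(ii)$, I would assign to $v_i$ the closed interval $[i, f(i)]$, where $f(i)$ is the largest index $j \ge i$ with $\{v_i, v_j\} \in \E$, setting $f(i) := i$ if $v_i$ has no larger neighbor. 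For $i < j$ one checks that these intervals meet precisely when $\{v_i, v_j\} \in \E$: adjacency gives $f(i) \ge j$ and hence a common point, while a common point forces $f(i) \ge j$, realized by some neighbor $v_{j'}$ of $v_i$ with $j' \ge j$, and property $(ii)$ then upgrades $\{v_i, v_{j'}\} \in \E$ to $\{v_i, v_j\} \in \E$. The main (though mild) obstacle is precisely this verification: one must invoke $(ii)$ to handle the case $j' > j$, where the right endpoint of $v_i$ is witnessed by a neighbor beyond $v_j$; without that hypothesis the constructed intervals need not be a faithful representation.
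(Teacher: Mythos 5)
Your proposal is correct, and for the substantive equivalence $(i) \Leftrightarrow (iii)$ it is exactly the paper's argument: the paper's proof consists of invoking Theorem \ref{thm:cocomp-characterize}, Remark \ref{rem:unlike-free} (labeling-independence of freeness, equivalently chordality, via \cite{St72,ER94}), and the Gilmore--Hoffman theorem \cite{GH64} that interval $=$ chordal $\cap$ cocomparability --- precisely the three ingredients you combine, including the key observation that the labeling witnessing compatibility automatically also witnesses freeness. The only place you diverge is $(ii) \Leftrightarrow (iii)$: the paper simply cites this as a known characterization of interval graphs (\cite[Theorem 4]{Ol91}), whereas you prove it from scratch, ordering vertices by left endpoints for one direction and building the representation $v_i \mapsto [i, f(i)]$ (with $f(i)$ the largest neighbor index) for the other, using property $(ii)$ to handle the case where the witness $f(i)$ exceeds $j$. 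Your direct construction is correct and makes the corollary self-contained at the cost of a paragraph; the paper's citation is shorter but leaves the classical equivalence as a black box. Both routes establish the statement.
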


\begin{proof} 
The equivalence $(ii) \Leftrightarrow (iii)$ is well known, e.g., \cite[Theorem 4]{Ol91}. 
The equivalence $(i) \Leftrightarrow  (iii)$ follows from Theorem \ref{thm:cocomp-characterize}, Remark \ref{rem:unlike-free}, and the fact that a graph is an interval graph if and only if it is a cocomparability and chordal graph \cite[Theorem 2]{GH64}.
 \end{proof}
 
We complete Table \ref{tab:concepts} by giving a graphic characterization of ideal subarrangements of braid arrangements, which was suggested to us by Shuhei Tsujie.

 \begin{theorem} 
 \label{thm:ideal-characterize}
 Let  $G = (\V,\E)$ be a graph with $|\V|=\ell$.  The following  are equivalent. 
\begin{enumerate}[(i)]
\item $G$ has a labeling using $[\ell]$ so that $\A(G)$ is an ideal-graphic arrangement, i.e., $\Psi(G)\subseteq \Phi^+(A_{\ell-1})$ is an ideal. 
\item  $G$ has an ordering  $v_1    < \cdots  < v_\ell$ of its vertices such that  if $ i < k < j$ and $\{v_i, v_j\} \in \E$, then $\{v_i, v_k\} \in \E$ and $\{v_k, v_j\} \in \E$.
\item   $G$  is  an unit interval graph.
    \end{enumerate}

\end{theorem}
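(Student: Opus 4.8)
The plan is to prove $(i)\Leftrightarrow(ii)$ by a direct translation through the type $A$ root poset, exactly in the spirit of the proofs of Corollaries \ref{cor:c-A} and \ref{cor:interval}, and then to obtain $(ii)\Leftrightarrow(iii)$ from the classical order-theoretic characterization of unit interval graphs. The only genuinely new content is the $(i)\Leftrightarrow(ii)$ step, where the difference from the compatible and interval cases is that being an ideal is a \emph{two-sided} and \emph{global} downward-closure condition, rather than a one-sided or ``at least one consecutive edge'' condition.

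First I would fix a labeling of $\V$ by $[\ell]$, which is the same datum as an ordering $v_1<\cdots<v_\ell$ with $v_i$ labelled $i$, and recall the description of the dominance order noted in the proof of Corollary \ref{cor:c-A}: for $i<j$ and $p<q$ one has $\epsilon_i-\epsilon_j\ge\epsilon_p-\epsilon_q$ if and only if $i\le p<q\le j$. Unwinding the definition of an ideal for $\Psi(G)$ then shows that $\Psi(G)$ is an ideal precisely when the following closure condition holds: whenever $\{i,j\}\in\E$ with $i<j$ and $i\le p<q\le j$, we also have $\{p,q\}\in\E$; equivalently, each ``edge interval'' $\{i,i+1,\ldots,j\}$ spans a clique. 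Granting this reformulation, the implication $(i)\Rightarrow(ii)$ is immediate, since for $i<k<j$ both $\epsilon_i-\epsilon_k$ and $\epsilon_k-\epsilon_j$ lie below $\epsilon_i-\epsilon_j$, forcing $\{v_i,v_k\},\{v_k,v_j\}\in\E$.

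The reverse implication $(ii)\Rightarrow(i)$ is the one step needing an argument, and I would recover the global clique condition from the local one by induction on $j-i$. For $j-i=1$ there is nothing to prove; for $j-i\ge 2$ and $\{i,j\}\in\E$, applying $(ii)$ with $k=j-1$ and with $k=i+1$ gives $\{i,j-1\}\in\E$ and $\{i+1,j\}\in\E$, and the inductive hypothesis applied to these two shorter edges makes every pair inside $[i,j-1]$ and inside $[i+1,j]$ an edge. Since any pair $\{p,q\}$ with $i\le p<q\le j$ other than $\{i,j\}$ itself satisfies $q\le j-1$ or $p\ge i+1$, all such pairs are covered, which yields the closure condition and hence $(i)$. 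Finally, $(ii)\Leftrightarrow(iii)$ is the well-known characterization of unit interval (equivalently proper interval, or indifference) graphs by the existence of such a vertex ordering, which I would simply cite. I expect the only real obstacle to be the bookkeeping in this induction; every other step is either a direct translation or a citation.
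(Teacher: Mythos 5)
Your proposal is correct and takes essentially the same route as the paper: the paper also cites the Looges--Olariu ordering characterization for $(ii)\Leftrightarrow(iii)$ and reduces $(i)\Leftrightarrow(ii)$ to exactly your key observation that $\epsilon_i-\epsilon_j\ge\epsilon_p-\epsilon_q$ if and only if $i\le p<q\le j$, leaving the rest as ``not hard.'' Your induction on $j-i$ simply supplies the details the paper omits, and it is sound.
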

\begin{proof} 
The equivalence $(ii) \Leftrightarrow (iii)$ is well known, e.g., \cite[Theorem 1]{LO93}.
The equivalence $(i) \Leftrightarrow (ii)$ is not hard. 
The key observation is $\epsilon_i - \epsilon_j \ge \epsilon_p - \epsilon_q$ if and only if $i \le p < q \le j$.

 \end{proof}

\section{Application to graph polynomials of Cocomparability graph}
\label{sec:Comparability}
 
Owing to Theorem \ref{thm:cocomp-characterize}, we will be able to give new formulas for the chromatic polynomial and the (reduced) graphic Eulerian polynomial of cocomparability graphs.

Let $G = ([\ell],\E)$ be a simple graph, i.e., no loops and no multiple edges. 
Let $c_G(t)$ be the chromatic polynomial of $G$. 
The \emph{graphic Eulerian polynomial} of $G$ is the polynomial $W_G(t)$ defined by 
\begin{equation*} \label{eq:E-pol}
\sum_{q \ge 0}c_G(q)t^q = \frac{W_G(t)}{(1-t)^{ \ell+1}}.
\end{equation*}
The coefficients of  $W_G(t)$ are proved to be nonnegative integers, and have various combinatorial interpretations, e.g., \cite{B92, CG95, S01}. 
Let us recall one of its interpretations following the last two references. 
Denote by $\mathfrak{S}_\ell$ the symmetric group on $[\ell]$.
\begin{definition} \label{def:g-descents-rank}
For $\pi=\pi_1\ldots \pi_\ell  \in \mathfrak{S}_\ell $, the \emph{rank} $\rho(\pi_i)$ of $\pi_i$ ($1 \le i \le \ell$) is defined to be the largest integer $r$ so that there are values $1 \le i_1 < i_2 <\cdots<i_r=i$ with 
$\{\pi_{i_j}, \pi_{i_{j+1}}\}\in \E$ for all $1 \le j \le r-1$.
We say that $\pi \in \mathfrak{S}_\ell $ has a \emph{graphic descent at $i\in[ \ell -1]$}  if either (1) $\rho(\pi_i) > \rho(\pi_{i+1})$, or (2) $\rho(\pi_i) =\rho(\pi_{i+1})$ and $\pi_i > \pi_{i+1}$.
The \emph{graphic Eulerian numbers} $w_k(G)$ $(1 \le k \le  \ell )$ are defined by
$$
    w_k(G)=   \#\{ \pi \in \mathfrak{S}_\ell \mid \mbox{$\pi$ has exactly $ \ell -k$ graphic descents}\}.
$$
 \end{definition}
 
\begin{theorem}\label{thm:gEp}
We have
$W_G(t)= \sum_{k=1}^{ \ell } w_k(G)t^k.$
 Equivalently, 
$$
c_G(t) = \sum_{k=1}^{\ell} w_{k}(G) \binom{t+ \ell -k}{\ell}.
$$
\end{theorem}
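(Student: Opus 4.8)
The plan is to prove Theorem~\ref{thm:gEp} by establishing the polynomial identity $\sum_{q \ge 0} c_G(q) t^q = W_G(t)/(1-t)^{\ell+1}$ with $W_G(t) = \sum_{k=1}^{\ell} w_k(G) t^k$, and then deriving the binomial formula for $c_G(t)$ as a purely formal consequence. The combinatorial heart lies in the first identity, and the strategy is to interpret $c_G(q)$ as counting proper colorings of $G$ with colors in $\{0,1,\ldots,q\}$ (or $\{1,\ldots,q\}$, adjusting indices), and then to set up a rank-preserving bijection between such colorings and pairs consisting of a permutation $\pi \in \mathfrak{S}_\ell$ together with a weakly increasing sequence compatible with the graphic descent set of $\pi$.

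First I would recall the standard ``Stanley-type'' expansion. Fix a proper coloring $f : [\ell] \to \{1, 2, \ldots, q\}$, meaning $f(u) \ne f(v)$ whenever $\{u,v\} \in \E$. The key step is to read off from $f$ a canonical permutation $\pi = \pi_1 \cdots \pi_\ell$ by sorting the vertices according to their color values, breaking ties in a way dictated by the rank function $\rho$ of Definition~\ref{def:g-descents-rank}: sort primarily by increasing color, and among vertices of equal color, order them so that the induced sequence records the graphic-descent structure consistently. The graphic descent condition (case (1) $\rho(\pi_i) > \rho(\pi_{i+1})$, or case (2) equality of ranks with $\pi_i > \pi_{i+1}$) is precisely engineered so that each proper coloring factors uniquely as a permutation carrying exactly $\ell - k$ graphic descents, together with a monotone adjustment of color values. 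I would then show that the number of proper colorings with values in $\{1,\ldots,q\}$ inducing a fixed $\pi$ with $\ell-k$ graphic descents equals the number of weakly increasing integer sequences of length $\ell$ with strict increases forced at the descent positions and values bounded by $q$; this count is the binomial $\binom{q + k - 1}{\ell}$ by a stars-and-bars argument where $\ell$ of the gaps are distributed and $\ell - k$ of them are forced to be strictly positive.

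Summing over all permutations grouped by their number of graphic descents gives
$$
c_G(q) = \sum_{k=1}^{\ell} w_k(G) \binom{q + k - 1}{\ell},
$$
valid for every nonnegative integer $q$, and since both sides agree as polynomials in $q$ of degree $\ell$, this is a polynomial identity. To convert this into the generating-function form, I would multiply by $t^q$, sum over $q \ge 0$, and apply the classical Worpitzky-style identity $\sum_{q \ge 0} \binom{q + k - 1}{\ell} t^q = t^{\ell - k + 1}/(1-t)^{\ell+1}$, which follows from repeated differentiation of the geometric series. Reindexing the resulting sum yields exactly $W_G(t)/(1-t)^{\ell+1}$ with $W_G(t) = \sum_{k=1}^{\ell} w_k(G) t^k$, and a final substitution $q \mapsto t$ (reading the binomial identity backwards) produces the stated formula $c_G(t) = \sum_{k=1}^{\ell} w_k(G)\binom{t + \ell - k}{\ell}$.

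The main obstacle I anticipate is verifying that the map ``proper coloring $\mapsto$ (permutation, monotone sequence)'' is genuinely a bijection, i.e., that the graphic-descent rule (with its rank-based tie-breaking) sorts each coloring into a \emph{unique} permutation and that the forced strict inequalities at graphic descents exactly account for the combinatorics. The rank function $\rho$ is graph-dependent rather than order-theoretic, so the usual proof for ordinary Eulerian numbers (where one sorts by value and breaks ties by position) must be adapted carefully to respect adjacency. I would handle this by checking that along any chain realizing the rank of a vertex, the colors are strictly increasing by properness, which guarantees $\rho$ is well-defined on colorings and compatible with the descent statistic; this is the delicate bookkeeping step where I expect the references \cite{S01, CG95} to supply the precise correspondence, so I would either cite their bijection directly or reproduce it in the $G$-graphic setting, noting that taking $G$ to be the complete graph recovers the classical Eulerian polynomial identity as a sanity check.
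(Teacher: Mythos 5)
Your proposal contains a genuine error at its combinatorial core: the descent--to--binomial correspondence is oriented backwards, and the resulting identity is false. You claim that the proper colorings inducing a fixed permutation $\pi$ with $\ell-k$ graphic descents are counted by $\binom{q+k-1}{\ell}$ (weakly increasing sequences with strict increases forced \emph{at the descent positions}), hence $c_G(q)=\sum_{k=1}^{\ell}w_k(G)\binom{q+k-1}{\ell}$. This is the reversal of the formula you are trying to prove, and the two agree only if the graphic Eulerian numbers satisfy the symmetry $w_k(G)=w_{\ell+1-k}(G)$. That symmetry holds in the classical (empty-graph) case --- which is why both conventions work in Stanley's $P$-partition argument and why your sanity check cannot detect the problem --- but it fails for general $G$. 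Concretely, for the graph $G$ of Example \ref{ex:G} ($\ell=4$, edges $\{1,2\},\{2,3\}$) one has $c_G(q)=q^2(q-1)^2$, so $c_G(1)=0$, whereas your formula at $q=1$ gives $\sum_k w_k(G)\binom{k}{4}=w_4(G)=4$, since Table \ref{tab:compute} exhibits four permutations with no graphic descents. The correct correspondence attaches to a permutation with $d$ graphic descents the count $\binom{q+d}{\ell}$, i.e., strictness is forced at the $\ell-1-d$ \emph{non-descent} positions. Your later steps inherit the same reversal and are internally inconsistent: from your intermediate formula and $\sum_{q\ge 0}\binom{q+k-1}{\ell}t^q=t^{\ell-k+1}/(1-t)^{\ell+1}$ the numerator you actually obtain is $\sum_k w_k(G)t^{\ell+1-k}$, and the ``reindexing'' needed to turn this into $\sum_k w_k(G)t^k$ is precisely the symmetry that fails; likewise the final ``substitution $q\mapsto t$'' turns $\binom{q+k-1}{\ell}$ into $\binom{t+k-1}{\ell}$, not the stated $\binom{t+\ell-k}{\ell}$.

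Separately, the step that carries all the difficulty --- the rank-based tie-breaking rule making ``coloring $\mapsto$ (permutation, monotone sequence)'' a well-defined bijection with the exact strictness pattern --- is never constructed: you describe the sort as done ``consistently'' and defer to \cite{CG95,S01}. But that bijection \emph{is} the theorem; the rank $\rho(\pi_i)$ of Definition \ref{def:g-descents-rank} is a statistic of the prefix $\pi_1\cdots\pi_i$, not of a coloring, and making the tie-break compatible with properness is where the cited authors do all the work. For comparison, the paper does not reprove this either: its proof of Theorem \ref{thm:gEp} is a citation of \cite[Theorem 2]{CG95} and \cite[Theorem 6]{S01}, with \cite[Theorem 2.1]{B92} supplying the purely formal equivalence between the generating-function and binomial forms. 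So your proposal is best read as a sketch of the cited proof, but as written it is oriented the wrong way and would establish a false identity. A minor additional slip: it is the \emph{empty} graph, not the complete graph, that recovers the classical Eulerian polynomial (Remark \ref{rem:F-edgeless-complete}); for the complete graph every permutation has zero graphic descents and $W_G(t)=\ell!\,t^{\ell}$.
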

\begin{proof} 
See, e.g., \cite[Theorem 2]{CG95}, \cite[Theorem 6]{S01}. 
The equivalence of the formulas holds true in a more general setting, e.g., \cite[Theorem 2.1]{B92}.
\end{proof}

 \begin{remark}
\label{rem:F-edgeless-complete}
 If $G$ is the empty graph, then the graphic descent is the ordinary descent, i.e., the index $i\in[\ell -1]$  such that $\pi_i > \pi_{i+1}$. 
In this case, $W_G(t)$ is known as the \emph{classical $\ell$-th Eulerian polynomial}, which  first appeared in a work of Euler \cite{E36}. 
\end{remark}

It is a standard fact that $c_G(t)$ is divisible by $t$. The \emph{reduced graphic Eulerian polynomial} of $G$ is the polynomial $Y_G(t)$ defined by 
\begin{equation*} \label{eq:r-E-pol}
\sum_{q \ge 1}\frac{c_G(q)}{q}t^q = \frac{Y_G(t)}{(1-t)^{\ell}}.
\end{equation*}
The polynomial $Y_G(t)$ also has nonnegative integer coefficients, e.g., \cite[Theorem 3.5]{J05}.  
More precisely, $Y_G(t)$ is interpreted in terms of the $h$-polynomial of a certain \emph{relative complex}. 
However, unlike $W_G(t)$, less seems to be known about combinatorial interpretations of the coefficients of $Y_G(t)$. 
We will give a combinatorial interpretation for $Y_G(t)$ when $G$ is a cocomparability graph (Theorem \ref{thm:shift-typeA}).
One can readily compute $Y_G(t)$ from $W_G(t)$ and vice versa as we will see below.
Write $Y_G(t) = \sum_{k=1}^{\ell} y_k(G)t^k$ (note that $Y_G(0) =0$).

\begin{proposition}\label{prop:relation}   
The polynomials $W_G(t)$ and $Y_G(t)$ satisfy the \emph{Eulerian recurrence}, i.e.,
\begin{equation*} \label{eq:suffice}
W_G(t) = t(1-t)\frac{d}{dt}Y_G(t)+t \ell Y_G(t).
\end{equation*}
Equivalently,  for every $1 \le k \le \ell $,
\begin{equation*} \label{eq:unnatural}
w_k(G) = ky_k(G)  +( \ell -k+1)y_{k-1}(G).
\end{equation*}
Conversely, if $F(t)$ is a polynomial such that $F(0)=0$, and $W_G(t)$ and $F(t)$ satisfy the Eulerian recurrence, then $F(t) = Y_G(t)$.
\end{proposition}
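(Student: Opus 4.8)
The plan is to connect the two defining generating functions by a single differentiation identity and then read off every assertion from it. Write $P(t) := \sum_{q \ge 0} c_G(q) t^q = W_G(t)/(1-t)^{\ell+1}$ and $R(t) := \sum_{q \ge 1} (c_G(q)/q) t^q = Y_G(t)/(1-t)^{\ell}$, regarded as formal power series (equivalently, rational functions). The \emph{crucial observation} is that $c_G(0)=0$, so differentiating $R$ term by term gives $\frac{d}{dt}R(t) = \sum_{q \ge 1} c_G(q) t^{q-1}$, and hence $t\,\frac{d}{dt}R(t) = \sum_{q \ge 1} c_G(q) t^q = \sum_{q \ge 0} c_G(q) t^q = P(t)$. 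That is, $P(t) = t R'(t)$. This is the only place where the divisibility $t \mid c_G(t)$ enters, and it is exactly what drives the reduction.

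First I would substitute the closed forms into $P = t R'$. Differentiating $R = Y_G(t)(1-t)^{-\ell}$ by the product rule and factoring out $(1-t)^{-\ell-1}$ gives $R'(t) = \big[(1-t)\frac{d}{dt}Y_G(t) + \ell Y_G(t)\big]/(1-t)^{\ell+1}$, so that $P(t) = t\big[(1-t)\frac{d}{dt}Y_G(t) + \ell Y_G(t)\big]/(1-t)^{\ell+1}$. Comparing numerators against $P(t) = W_G(t)/(1-t)^{\ell+1}$ yields the Eulerian recurrence $W_G(t) = t(1-t)\frac{d}{dt}Y_G(t) + t\ell Y_G(t)$ immediately. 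For the equivalent coefficient form, I would expand both sides in powers of $t$: writing $Y_G(t) = \sum_k y_k(G)t^k$, the term $t(1-t)\frac{d}{dt}Y_G(t)$ contributes $k y_k(G) - (k-1)y_{k-1}(G)$ to the coefficient of $t^k$, while $t\ell Y_G(t)$ contributes $\ell y_{k-1}(G)$, so that $w_k(G) = k y_k(G) + (\ell-k+1)y_{k-1}(G)$, using the convention $y_0(G)=0$. One also checks that the spurious coefficient of $t^{\ell+1}$ vanishes, consistent with $\deg W_G \le \ell$.

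For the converse, I would argue by uniqueness of this recurrence. If $F(t) = \sum_k f_k t^k$ has $F(0)=0$ (so $f_0 = 0$) and satisfies the Eulerian recurrence with $W_G$, then extracting the coefficient of $t^k$ gives precisely $w_k(G) = k f_k + (\ell-k+1)f_{k-1}$ for every $k \ge 1$. Since the leading coefficient $k$ is nonzero for all $k \ge 1$, this relation determines $f_k$ recursively and uniquely from $w_k(G)$ and $f_{k-1}$, starting from $f_0=0$. As $Y_G$ obeys the identical recurrence with the identical initial value, induction on $k$ forces $f_k = y_k(G)$ for all $k$; the vanishing $w_k(G)=0$ for $k>\ell$ simultaneously forces $f_k=0$ there, so $F$ is a polynomial of degree $\le \ell$ with $F = Y_G$.

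I expect no genuine obstacle here: the entire content is the identity $P = t R'$ coming from $c_G(0)=0$, after which the forward implication and its coefficient translation are routine formal-power-series bookkeeping, and the converse is a one-line uniqueness argument exploiting that the coefficient of $f_k$ never vanishes. The only point demanding minor care is the boundary-term accounting in the coefficient extraction (the cases $k=1$ and $k=\ell,\ell+1$), which I would dispatch using the convention $y_0(G)=0$.
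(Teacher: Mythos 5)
Your proposal is correct, and it is exactly the computation the paper has in mind: the paper's own proof of this proposition is simply the word ``Straightforward,'' and your argument (the identity $\sum_{q\ge 0} c_G(q)t^q = t\,\frac{d}{dt}\sum_{q\ge 1}\frac{c_G(q)}{q}t^q$ from $c_G(0)=0$, followed by coefficient extraction and the uniqueness induction for the converse) is the standard way to fill in that omitted routine verification. Nothing in your write-up deviates from or adds to what the authors intended to leave to the reader.
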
 
\begin{proof} 
Straightforward. 
\end{proof}

Now to link the Eulerian polynomials above with  the cocomparability graphs, we need to recall the notion of $\A$-Eulerian polynomial following \cite[\S4]{ATY20}. 
We will focus only on type $A$.
Let $G^c = (\V, \E(G^c))$ be the complement graph of $G$. 
 \begin{definition}\label{def:ek}
 We say that $\pi=\pi_1\ldots \pi_\ell  \in \mathfrak{S}_\ell $ has a \emph{$\A$-descent (w.r.t. $G$)} at $i\in[\ell]$ if $\pi_i> \pi_{i+1}$ and $\{\pi_i, \pi_{i+1}\}\in \E(G^c)$ ($\pi_{ \ell+1}= \pi_1$).
For $0 \le k \le \ell $, define
$$
    f_k(G)=   \frac1{\ell}\#\{ \pi \in \mathfrak{S}_\ell \mid \mbox{$\pi$ has exactly $ \ell -k$ $\A$-descents}\}.
$$
It is easily seen that $f_0(G)=0$. 
Set $F_G(t) := \sum_{k=1}^{\ell} f_k(G)t^k$. 
\end{definition}
Recall the notation $\Psi(G) = \{  \epsilon_i-\epsilon_j  \mid \{i,j\}  \in \E \,( i <  j)\} \subseteq \Phi^+(A_{\ell-1})$ in \S\ref{sec:intro}.
It is not hard to check that $F_{G}(t)$ equals the $\A$-Eulerian polynomial  \cite[Definition 4.2]{ATY20} of $\Psi(G)$.
If $G$ is the empty graph, then $F_G(t)$ is the classical $( \ell -1)$-th Eulerian polynomial.
Unlike $W_G(t)$ or $Y_G(t)$,  $F_G(t)$ is not a graph invariant in the sense that it depends on the labeling of $G$  (see Examples \ref{ex:G} and \ref{ex:G'}). 
The proposition below says computing $F_G(t)$ requires only $( \ell -1)!$ permutations.
\begin{proposition}
\label{prop:positive}   
If $1 \le k \le \ell $, then
$$
    f_k(G)=\#\{ \pi \in \mathfrak{S}_\ell \mid \mbox{$\pi_1=\ell$ and $\pi$ has exactly $ \ell -k$ $\A$-descents}\}.
$$
 \end{proposition}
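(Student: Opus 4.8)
The plan is to exploit the fact that the $\A$-descent statistic is intrinsically \emph{cyclic}. Because Definition~\ref{def:ek} uses the convention $\pi_{\ell+1}=\pi_1$, the number of $\A$-descents of $\pi=\pi_1\ldots\pi_\ell$ depends only on the collection of cyclically adjacent ordered pairs $(\pi_1,\pi_2),(\pi_2,\pi_3),\ldots,(\pi_\ell,\pi_1)$. In particular it is unchanged if we rotate the word $\pi$, and this rotation symmetry is exactly what accounts for the factor $1/\ell$ in the definition of $f_k(G)$.

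Concretely, I would let $\sigma\colon\mathfrak{S}_\ell\to\mathfrak{S}_\ell$ be the rotation $\pi_1\pi_2\ldots\pi_\ell\mapsto\pi_2\ldots\pi_\ell\pi_1$, generating a cyclic group $C_\ell$ of order $\ell$ acting on $\mathfrak{S}_\ell$. Two facts drive the proof. First, the number of $\A$-descents is constant on each $C_\ell$-orbit, which is immediate from the rotation-invariance of the cyclic pair set above. Second, the action is \emph{free}: if $\sigma^j(\pi)=\pi$ with $0<j<\ell$ then $\pi_{i+j}=\pi_i$ for all $i$ (indices modulo $\ell$), and evaluating at $i=1$ forces $\pi_{1+j}=\pi_1$, contradicting injectivity of $\pi$ since $1+j\not\equiv1\pmod{\ell}$. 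Hence every orbit has size exactly $\ell$.

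I would then choose a canonical representative in each orbit. Since the value $\ell$ occurs exactly once in any $\pi$, rotating it into the first position yields an element with $\pi_1=\ell$, and freeness makes this element unique within its orbit. Writing $N_k$ for the number of $C_\ell$-orbits on which $\pi$ has exactly $\ell-k$ $\A$-descents, the constancy of the statistic on orbits together with the orbit size $\ell$ gives
$$\#\{\pi\in\mathfrak{S}_\ell\mid \pi\text{ has exactly }\ell-k\ \A\text{-descents}\}=\ell\,N_k,$$
so Definition~\ref{def:ek} yields $f_k(G)=N_k$. On the other hand, the canonical representatives identify $N_k$ with the number of $\pi$ satisfying $\pi_1=\ell$ and having exactly $\ell-k$ $\A$-descents, which is precisely the asserted formula.

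The argument is essentially routine, and I anticipate the only delicate points to be purely verificational: confirming that the $\A$-descent count truly respects rotation (which rests entirely on the cyclic convention $\pi_{\ell+1}=\pi_1$ built into Definition~\ref{def:ek}) and that the action is free, so that each orbit contributes the full factor $\ell$ and is represented exactly once by the permutation beginning with $\ell$. No substantive obstacle is expected beyond this bookkeeping.
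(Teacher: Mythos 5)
Your proposal is correct and takes essentially the same approach as the paper: your rotation $\sigma$ is precisely right multiplication by the $\ell$-cycle $\tau = 23\ldots\ell 1$ used there, and both arguments rest on the rotation-invariance of the cyclic $\mathcal{A}$-descent statistic, the decomposition of $\mathfrak{S}_\ell$ into orbits of size $\ell$, and the choice of the representative with $\pi_1 = \ell$. The only difference is one of exposition: you spell out the freeness of the action and the uniqueness of the representative, which the paper leaves implicit by citing the analogous argument for the empty graph.
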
 
\begin{proof} 
When $G$ is the empty graph on $[\ell]$, it is already proved in \cite[Exercise 1.11]{P15}. 
The same argument applies to the general case. 
Let $\tau=23\ldots \ell1 \in \mathfrak{S}_\ell $, then $\tau$ cyclically shifts the numbers $1$ to $ \ell $. 
The action of $\tau$ on $\mathfrak{S}_\ell $ by right multiplication partitions $\mathfrak{S}_\ell $ into $( \ell -1)!$ orbits of size $\ell$. 
The number of $\A$-descents is constant on each orbit. 
Also, we can choose the permutation $\pi$ with $\pi_1=\ell$ to be a representative of each orbit.
 \end{proof}
 
The following is the main result of this section. 
 \begin{theorem}\label{thm:shift-typeA}   
Let $G=([\ell],\E)$ be a  graph. 
The following  are equivalent.
\begin{enumerate}[(i)]
\item $G$ is a cocomparability graph and the ordering $1<2<\cdots<\ell$ is an umbrella-free ordering.
\item The ordering $1<2<\cdots<\ell$ is an umbrella-free ordering.
    \item $\Psi(G) \subseteq \Phi^+(A_{\ell-1})$ is compatible (= strongly compatible). 
    \item   The chromatic polynomial of $G$ is given by 
    $$c_G(t) =t\sum_{k=1}^{n} f_k(G) {t+\ell-1-k \choose \ell-1}.$$
    \item $F_{G}(t)$ equals the reduced graphic Eulerian polynomial $G$, i.e.,
        $$F_{G}(t) = Y_{G}(t)=(1-t)^\ell\sum_{q \ge 1}\frac{c_G(q)}{q}t^q,$$
Equivalently, $F_{G}(t) = Y_{K}(t),$ where $K$ is any graph isomorphic to $G$.
   \item The polynomials $W_{G}(t)$ and $F_{G}(t)$ satisfy the Eulerian recurrence, i.e.,
\begin{equation*} \label{eq:suffice-thm}
W_{G}(t) = t(1-t)\frac{d}{dt}F_{G}(t)+t \ell F_{G}(t)
\end{equation*}
Equivalently,  for every $1 \le k \le \ell $,
\begin{equation*} \label{eq:unnatural-thm}
w_k(G) = kf_k(G)  +( \ell -k+1)f_{k-1}(G).
\end{equation*}
    \end{enumerate}
\end{theorem}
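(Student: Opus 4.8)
The plan is to split the six statements into two clusters and then bridge them. The conditions (i), (ii), (iii) are combinatorial–geometric and concern orderings and compatibility of $\Psi(G)$, whereas (iv), (v), (vi) are polynomial identities relating $c_G$, $W_G$, $Y_G$ and $F_G$. I would first settle the internal equivalences inside each cluster using results already in hand, and only then connect the two clusters, which is where the substance of \cite{ATY20} enters.

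For the first cluster, the implication (i)$\Rightarrow$(ii) is trivial, and (ii)$\Rightarrow$(i) is immediate from Lemma \ref{lem:cocomp-characterize1}: an umbrella-free ordering witnesses that $G$ is a cocomparability graph. For (ii)$\Leftrightarrow$(iii) I would use the equivalence (ii)$\Leftrightarrow$(iii) of Lemma \ref{lem:cocomp-characterize1} with $v_i = i$ to rephrase ``the identity ordering is umbrella-free'' as the sequence condition of Corollary \ref{cor:c-A}(ii), which by Corollary \ref{cor:c-A} is exactly compatibility of $\Psi(G)$; Theorem \ref{thm:C=SC-A} then supplies the parenthetical ``compatible = strongly compatible''.

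For the second cluster I would argue algebraically. Note first that $F_G(0)=0$ since $f_0(G)=0$. Expanding $\frac{F_G(t)}{(1-t)^\ell}=\sum_{k=1}^\ell f_k(G)\,\frac{t^k}{(1-t)^\ell}$ via $\frac{t^k}{(1-t)^\ell}=\sum_{q\ge k}\binom{q-k+\ell-1}{\ell-1}t^q$ and extracting the coefficient of $t^q$ (the summands with $k>q$ vanish), one sees that the defining identity $\sum_{q\ge1}\frac{c_G(q)}{q}t^q=\frac{F_G(t)}{(1-t)^\ell}$ of (v) is equivalent, coefficientwise and hence as a polynomial identity, to $c_G(q)=q\sum_{k=1}^\ell f_k(G)\binom{q+\ell-1-k}{\ell-1}$, which is (iv). The equivalence (v)$\Leftrightarrow$(vi) is then immediate from Proposition \ref{prop:relation}: since $Y_G$ is the unique polynomial vanishing at $0$ that satisfies the Eulerian recurrence with $W_G$, and $F_G(0)=0$, the polynomial $F_G$ satisfies that recurrence (statement (vi)) precisely when $F_G=Y_G$ (statement (v)). The ``equivalently $F_G=Y_K$'' clause follows because $Y_G$ depends only on $c_G$, hence only on the isomorphism type of $G$.

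It remains to bridge the two clusters, which I regard as the main obstacle. Here I would invoke the Worpitzky machinery of \cite{ATY20}, whose governing principle (foreshadowed by Lemma \ref{lem:ideal-crucial} and Definition \ref{def:compatibleW}) is that $\Psi$ is compatible exactly when the characteristic quasi-polynomial of $\A_\Psi$ is computed by the Worpitzky summation built from the $\A$-Eulerian polynomial. Two identifications make this concrete in type $A$: the characteristic quasi-polynomial of $\A(G)$ is the honest polynomial $c_G$ (the type-$A$ structure constants being $\pm1$, so the mod-$q$ count is just the proper $q$-colorings), and the $\A$-Eulerian polynomial of $\Psi(G)$ is exactly $F_G$, as remarked after Definition \ref{def:ek}. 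Under these identifications the Worpitzky formula becomes precisely (iv), linking (iii) and (iv). I expect the delicate point to be the \emph{converse} direction of this bridge, namely that failure of compatibility forces (iv) to break: if the invoked theorem is stated only as ``compatible $\Rightarrow$ formula'', I would instead close the loop combinatorially, using that a violating umbrella $i<k<j$ with $\{i,j\}\in\E$ and $\{i,k\},\{k,j\}\notin\E$ yields the decomposition $\epsilon_i-\epsilon_j=(\epsilon_i-\epsilon_k)+(\epsilon_k-\epsilon_j)$ with neither summand in $\Psi(G)$, which by Definition \ref{def:strongly-compatible} and Lemma \ref{lem:ideal-crucial} produces an alcove intersection that cannot be lifted to a facet, and then tracking the resulting discrepancy through the $\A$-descent count defining $F_G$.
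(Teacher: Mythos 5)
Your proposal is correct, and its skeleton matches the paper's proof: the cluster (i)$\Leftrightarrow$(ii)$\Leftrightarrow$(iii) via Lemma \ref{lem:cocomp-characterize1} and Corollary \ref{cor:c-A} (with Theorem \ref{thm:C=SC-A} for the parenthetical), and (v)$\Leftrightarrow$(vi) via Proposition \ref{prop:relation} together with $F_G(0)=0$, are exactly what the paper does. The one genuine difference is in the middle: the paper disposes of (iii)$\Leftrightarrow$(iv)$\Leftrightarrow$(v) in a single stroke by citing \cite[Theorems 4.11 and 4.24]{ATY20} restricted to type $A$, whereas you prove (iv)$\Leftrightarrow$(v) by hand, expanding $t^k/(1-t)^\ell=\sum_{q\ge k}\binom{q-k+\ell-1}{\ell-1}t^q$ and comparing coefficients; this computation is correct (the terms with $k>q$ indeed vanish since $\binom{q+\ell-1-k}{\ell-1}=0$ there, and two polynomials agreeing at all positive integers coincide), and it buys a more self-contained argument that reduces the reliance on \cite{ATY20} to the single bridge (iii)$\Leftrightarrow$(iv). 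Your concern that this bridge might only be available in the direction ``compatible $\Rightarrow$ formula'' is moot: the cited results are stated as equivalences, which is precisely how the paper invokes them. It is worth saying that your contingency plan, had the converse been missing, is not a proof as sketched --- exhibiting one umbrella violation gives a single non-liftable alcove intersection, but ``tracking the resulting discrepancy through the $\A$-descent count'' is exactly the quantitative step that would need to be carried out, and nothing in the tools at hand (Definition \ref{def:strongly-compatible}, Lemma \ref{lem:ideal-crucial}) does it for you; fortunately your primary route never needs it.
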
 
\begin{proof} 
The equivalences $(i) \Leftrightarrow (ii)\Leftrightarrow (iii)$ follow from Corollary \ref{cor:c-A} and Lemma \ref{lem:cocomp-characterize1}.
The statements (iii), (iv) and (v) are simply those in  \cite[Theorems 4.11 and 4.24]{ATY20} when restricting the root system to type $A$, hence they are equivalent. 
The equivalence  $(v) \Leftrightarrow (vi)$ follows from Proposition \ref{prop:relation} and the fact that $F_G(0)=0$ (Definition \ref{def:ek}). 
 \end{proof}
 
Thus, to compute $c_G(t)$, $Y_{G}(t)$ and $W_{G}(t)$ for a given cocomparability graph $G$, in principle, we can do as follows: find an umbrella-free ordering $v_1 < v_2  < \cdots  <  v_\ell$ of its vertices (which can be done in linear time \cite{McS99}), label each vertex $v_i$ by $i\in[\ell]$, and apply Theorem \ref{thm:shift-typeA}.

\begin{example}
\label{ex:G}
Let $G$ be a graph given in Figure \ref{fig:4}. 
Thus, $\Psi(G)= \{\alpha_1, \alpha_2\} \subseteq \Phi^+(A_3)$, which is an ideal, hence compatible (Theorem \ref{thm:ideal-strong-com}).
The computation of $F_G(t)$ and $W_G(t)$ is illustrated in Table \ref{tab:compute}.
Let $\tau=2341$. 
There are $8$ permutations having $2$ $\A$-descents and $16$ having  $1$ $\A$-descent. Thus, $F_G(t)=4t^3+2t^2$. 
There are $4$ permutations having $2$ graphic descents (cells in green), $4$ having $0$ graphic descent (cells in red), and the remaining $16$ have $1$  graphic descent. 
Thus, $W_G(t)=4t^4+16t^3+2t^2$. 
The calculation is consistent with Theorem \ref{thm:shift-typeA}.
\end{example}

\begin{table}[htbp]
\centering
{\footnotesize\renewcommand\arraystretch{1.5} 
\begin{tabular}{|c|c|c|c|c|}
\hline
\mbox{$\#\A$-descents} & $\pi$ & $\pi\tau$   & $\pi\tau^2$ & $\pi\tau^3$  \\
\hline\hline
  \multirow{2}{*}{$2$} &  \cellcolor{green!50}4312 & \cellcolor{green!50}3124  & 1243 & 2431  \\
  & \cellcolor{green!50}4231 & \cellcolor{green!50}2314  & 3142 &  \cellcolor{red!25}1423 \\
\hline
\multirow{4}{*}{$1$} & 4213 & 2134  & \cellcolor{red!25}1342  &   \cellcolor{red!25} 3421  \\
& 4132 & 1324  &  3241 &  \cellcolor{red!25}2413   \\
&  4321 & 3214  & 2143  &  1432  \\
& 4123 & 1234 &  2341 & 3412   \\
\hline
\end{tabular}
}
\bigskip
\caption{Computation of $F_G(t)$ and $W_G(t)$ for the graph $G$ in Figure \ref{fig:4}.}
\label{tab:compute}
\end{table}

\begin{example}
\label{ex:G'}
Let $G'$ be a graph given in Figure \ref{fig:4}. 
Note that $G$ and $G'$ have isomorphic underlying unlabeled graphs.
But $\Psi(G')$ is not compatible, which can be checked  either by Theorem \ref{thm:shift-typeA}(v) ($Y_{G'}(t) \ne F_{G'}(t)$), or by  Theorem \ref{thm:shift-typeA}(ii) ($1<2<4$, $\{1,4\} \in \E$ but $\{1,2\} \notin \E$, $\{2,4\} \notin \E$).
\end{example}

\begin{example}
\label{ex:Claw}
Let $G$ be the claw $K_{1,3}$ given in Figure \ref{fig:4}. 
By Theorem \ref{thm:shift-typeA}(ii), $\Psi(K_{1,3})$ is compatible. 
By Theorem  \ref{thm:ideal-characterize}, $\Psi(K_{1,3})$  is not an ideal w.r.t. any positive system since $K_{1,3}$ is not an interval graph. 
\end{example}

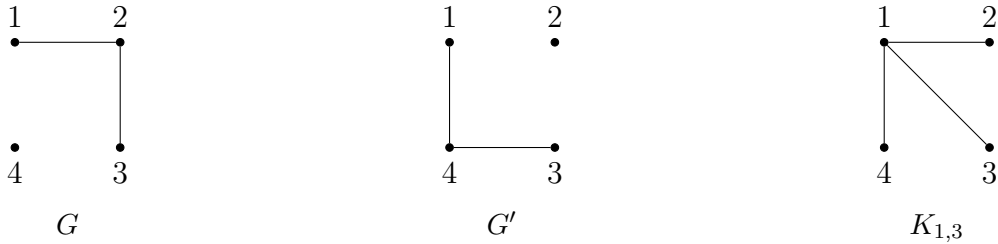
\begin{figure}[htbp]
\centering
\begin{subfigure}{.35\textwidth}
  \centering
\begin{tikzpicture}[scale=.7,colorstyle/.style={circle, draw=black!100,fill=black!100, inner sep=0pt, minimum size=1mm}]
    \node (1) at (-1,1)[colorstyle,label=above:$1$]{};
    \node (2) at (1,1)[colorstyle,label=above:$2$]{};
    \node (3) at (1,-1)[colorstyle,label=below:$3$]{};
    \node (4) at (-1,-1)[colorstyle,label=below:$4$]{};
 \draw (2) edge [-] (1);
    \draw[](3) edge [-](2);
\end{tikzpicture}
  \caption*{$G$}
  \label{fig:sub1}
\end{subfigure}%
\begin{subfigure}{.35\textwidth}
  \centering
\begin{tikzpicture}[scale=.7,colorstyle/.style={circle, draw=black!100,fill=black!100, inner sep=0pt, minimum size=1mm}]
    \node (1) at (-1,1)[colorstyle,label=above:$1$]{};
    \node (2) at (1,1)[colorstyle,label=above:$2$]{};
    \node (3) at (1,-1)[colorstyle,label=below:$3$]{};
    \node (4) at (-1,-1)[colorstyle,label=below:$4$]{};
 \draw (4) edge [-] (1);
    \draw(3) edge [-](4);
\end{tikzpicture}
  \caption*{$G'$}
  \label{fig:sub2}
\end{subfigure}%
\begin{subfigure}{.35\textwidth}
  \centering
\begin{tikzpicture}[scale=.7,colorstyle/.style={circle, draw=black!100,fill=black!100, inner sep=0pt, minimum size=1mm}]
    \node (1) at (-1,1)[colorstyle,label=above:$1$]{};
    \node (2) at (1,1)[colorstyle,label=above:$2$]{};
    \node (3) at (1,-1)[colorstyle,label=below:$3$]{};
    \node (4) at (-1,-1)[colorstyle,label=below:$4$]{};
    \draw[](1) edge [-](4);
    \draw[](2) edge [-](1);
    \draw[](3) edge [-](1);
\end{tikzpicture}
  \caption*{$K_{1,3}$}
  \label{fig:sub3}
\end{subfigure}
\caption{Three cocomparability graphs with $4$ vertices.}
\label{fig:4}
\end{figure}

 
\vn
\textbf{Acknowledgements.}
We would like  to thank Ahmed Umer Ashraf and Masahiko Yoshinaga for stimulating conversations on the graphic Eulerian polynomials.
We would also like to thank Shuhei Tsujie for valuable discussion and for letting us know the characterization of  ideal-graphic arrangements in terms of unit interval graphs (Theorem \ref{thm:ideal-characterize}). 
The first author is supported by JSPS Research Fellowship for Young Scientists Grant Number 19J12024.
The second author was partially supported by JSPS KAKENHI 19J00312 and 19K14505.
\bibliographystyle{alpha} 
\bibliography{references}

\newcommand{\etalchar}[1]{$^{#1}$}
\begin{thebibliography}{ABC{\etalchar{+}}16}

\bibitem[ABC{\etalchar{+}}16]{ABCHT16}
T.~Abe, M.~Barakat, M.~Cuntz, T.~Hoge, and H.~Terao.
\newblock The freeness of ideal subarrangements of {W}eyl arrangements.
\newblock {\em J. Eur. Math. Soc.}, 18(6):1339--1348, 2016.

\bibitem[Ath05]{A05}
C.~A. Athanasiadis.
\newblock On a refinement of the generalized {C}atalan numbers for {W}eyl
  groups.
\newblock {\em Trans. Amer. Math. Soc.}, 357:179--196, 2005.

\bibitem[ATY20]{ATY20}
A.~U. Ashraf, T.~N. Tran, and M.~Yoshinaga.
\newblock Eulerian polynomials for subarrangements of {W}eyl arrangements.
\newblock {\em Adv. Appl. Math.}, 120:102064, 2020.
\newblock \url{http://dx.doi.org/10.1016/j.aam.2020.102064}, in press.

\bibitem[Bre92]{B92}
F.~Brenti.
\newblock Expansions of chromatic polynomials and log-concavity.
\newblock {\em Trans. Amer. Math. Soc.}, 332(2):729--756, 1992.

\bibitem[CG95]{CG95}
F.R.K. Chung and R.L. Graham.
\newblock On the cover polynomial of a digraph.
\newblock {\em J. Combin. Theory Ser. B}, 65(2):273--29, 1995.

\bibitem[ER94]{ER94}
P.~H. Edelman and V.~Reiner.
\newblock Free hyperplane arrangements between ${A}_{n-1}$ and ${B}_n$.
\newblock {\em Math. Z.}, 215:347--365, 1994.

\bibitem[Eul36]{E36}
L.~Euler.
\newblock Methodus universalis series summandi ulterius promota.
\newblock {\em Commentarii Acd. Scientiarum Imperialis Petropolitanae},
  8:147--158, 1736.
\newblock Reprinted in his Opera Omnia, series 1, volume 14, 124-137, 1741.

\bibitem[Gal67]{Ga67}
T.~Gallai.
\newblock Transitiv orientierbare graphen.
\newblock {\em Acta Math. Acad. Sci. Hung.}, 18:25--66, 1967.

\bibitem[GH64]{GH64}
P.~C. Gilmore and A.~J. Hoffman.
\newblock A characterization of comparability graphs and interval graph.
\newblock {\em Canad. J. Math.}, 16:539--548, 1964.

\bibitem[Hum90]{H90}
J.~E. Humphreys.
\newblock {\em Reflection Groups and {C}oxeter Groups}.
\newblock Cambridge University Press, 1990.

\bibitem[Jon05]{J05}
J.~Jonsson.
\newblock The topology of the coloring complex.
\newblock {\em J. Algebr. Comb.}, 21:311--329, 2005.

\bibitem[JS93]{JS93}
T.~J\'ozefiak and B.~E. Sagan.
\newblock Basic derivations for subarrangements of coxeter arrangements.
\newblock {\em J. Algebr. Comb.}, 2:291--320, 1993.

\bibitem[JT84]{JT84}
M.~Jambu and H.~Terao.
\newblock Free arrangements of hyperplanes and supersolvable lattices.
\newblock {\em Adv. in Math.}, 52(3):248--258, 1984.

\bibitem[KS93]{KS93}
D.~Kratsch and L.~Stewart.
\newblock Domination on cocomparability graphs.
\newblock {\em SIAM J. Discrete Math.}, 6(3):400--417, 1993.

\bibitem[LO93]{LO93}
P.~J. Looges and S.~Olariu.
\newblock Optimal greedy algorithms for indifference graphs.
\newblock {\em Comput. Math. Appl.}, 25(7):15--25, 1993.

\bibitem[MS99]{McS99}
R.~M. McConnell and J.~P. Spinrad.
\newblock Modular decomposition and transitive orientation.
\newblock {\em Discrete Math.}, 201(1-3):189--241, 1999.

\bibitem[Ola91]{Ol91}
S.~Olariu.
\newblock An optimal greedy heuristic to color interval graphs.
\newblock {\em Inf. Process. Lett.}, 37(1):21--25, 1991.

\bibitem[OT92]{OT92}
P.~Orlik and H.~Terao.
\newblock {\em Arrangements of hyperplanes}.
\newblock Grundlehren der Mathematischen Wissenschaften 300, Springer-Verlag,
  Berlin, 1992.

\bibitem[Pet15]{P15}
T.~K. Petersen.
\newblock {\em {E}ulerian numbers}.
\newblock Birkh\"{a}user Basel, 2015.

\bibitem[Shi87]{Shi87}
J.-Y. Shi.
\newblock Alcoves corresponding to an affine {W}eyl group.
\newblock {\em J. London Math. Soc.}, 35:42--55, 1987.

\bibitem[Sta72]{St72}
R.P. Stanley.
\newblock Supersolvable lattices.
\newblock {\em Algebra Univers.}, 2:197--217, 1972.

\bibitem[Ste01]{S01}
E.~Steingr\'imsson.
\newblock The coloring ideal and coloring complex of a graph.
\newblock {\em J. Algebr. Comb.}, 14:73--84, 2001.

\bibitem[STT19]{STT19}
D.~Suyama, M.~Torielli, and S.~Tsujie.
\newblock Signed graphs and the freeness of the {W}eyl subarrangements of type
  ${B}_\ell$.
\newblock {\em Discrete Math.}, 342(1):233--249, 2019.

\bibitem[Thi14]{Th14}
M.~Thiel.
\newblock On floors and ceilings of the $k$-{C}atalan arrangement.
\newblock {\em Electron. J. Comb.}, 21(4):P4.36, 2014.

\bibitem[Tro92]{Tr92}
W.~T. Trotter.
\newblock {\em Combinatorics and Partially Ordered Sets: Dimension Theory}.
\newblock Johns Hopkins University Press, 1992.

\bibitem[Yos18]{Y18W}
M.~Yoshinaga.
\newblock {W}orpitzky partitions for root systems and characteristic
  quasi-polynomials.
\newblock {\em Tohoku Math. J.}, 70(1):39--63, 2018.

\end{thebibliography}

\end{document}